\def\BibTeX{{\rm B\kern-.05em{\sc i\kern-.025em b}\kern-.08emT\kern-.1667em\lower.7ex\hbox{E}\kern-.125emX}}
\newtheorem{lai}{Lairez's Theorem} 
\newtheorem{dfn}{Definition}[section]
\newtheorem{rem}[dfn]{Remark} 
\newtheorem{prop}[dfn]{Proposition}
\newtheorem{thm}[dfn]{Theorem} 
\newtheorem{lemma}[dfn]{Lemma}
\newtheorem{cor}[dfn]{Corollary}
\definecolor{purple}{rgb}{.5,0,.5}
\definecolor{red}{rgb}{.6,0,0} 
\definecolor{green}{rgb}{0,.5,0} 
\renewcommand{\qed}{$\blacksquare$}
\newcommand{\thth}{^{\text{\underline{th}}} }
\newcommand{\np}{{\mathbf{NP}}}
\newcommand{\gln}{\mathbb{G}\mathbb{L}_n}
\newcommand{\eps}{\varepsilon}
\newcommand{\E}{\mathbb{E}}
\renewcommand{\P}{\mathbb{P}}
\newcommand{\Q}{\mathbb{Q}}
\newcommand{\R}{\mathbb{R}}
\renewcommand{\C}{\mathbb{C}}
\newcommand{\N}{\mathbb{N}}
\newcommand{\Z}{\mathbb{Z}}
\newcommand{\Zn}{\Z^n}
\newcommand{\Rn}{\R^n}
\newcommand{\Cn}{\C^n}
\newcommand{\Cs}{\C^*}
\newcommand{\cA}{\mathcal{A}}
\newcommand{\Rsn}{{(\R^*)}^n}
\newcommand{\Csn}{{(\C^*)}^n}
\newcommand{\dia}{$\diamond$}
\begin{document} 

\title[Faster Solution to Smale's 17$\thth$ Problem I]{
A Faster Solution to Smale's 17$\thth$ Problem I: Real Binomial Systems}

\author{Grigoris Paouris}
\authornote{Partially supported by NSF grant DMS-1812240.}  
\email{grigoris@math.tamu.edu}
\affiliation{
\institution{Texas A\&{}M University}
\streetaddress{TAMU 3368}
\city{College Station}
\state{Texas}
\postcode{77843-3368}
}
\author{Kaitlyn Phillipson}
\authornote{Partially supported by NSF REU grant DMS-1460766   
and NSF grant CCF-1409020.}
\email{kphillip@stedwards.edu}
\affiliation{
\institution{St.\ Edwards University}
\streetaddress{3001 South Congress Ave.}
\city{Austin}
\state{Texas}
\postcode{78704} 
} 
\author{J.\ Maurice Rojas}
\authornote{Partially supported by NSF REU grant DMS-1757872  
and NSF grant CCF-1409020.}
\email{rojas@math.tamu.edu}
\affiliation{
\institution{Texas A\&{}M University} 
\streetaddress{TAMU 3368}
\city{College Station}
\state{Texas}
\postcode{77843-3368} 
}

\begin{abstract}   
Suppose $F:=(f_1,\ldots,f_n)$ is a system of random $n$-variate polynomials 
with $f_i$ having degree $\leq\!d_i$ and the coefficient of
$x^{a_1}_1\cdots x^{a_n}_n$ in $f_i$ being an independent complex Gaussian of
mean $0$ and variance $\frac{d_i!}{a_1!\cdots a_n!\left(d_i-\sum^n_{j=1}a_j
\right)!}$. Recent progress on Smale's
17$\thth$ Problem by Lairez --- building upon seminal work of Shub, Beltran, 
Pardo, B\"{u}rgisser, and Cucker --- has resulted in a deterministic algorithm 
that finds a single (complex) approximate root of $F$ using just 
$N^{O(1)}$ arithmetic operations on average, where 
$N\!:=\!\sum^n_{i=1}\frac{(n+d_i)!}{n!d_i!}$\linebreak  
($=n(n+\max_i d_i)^{O(\min\{n,\max_i 
d_i)\}}$) is the maximum possible total number of monomial terms for such 
an $F$. However, can one go faster when the number of terms is smaller,  
and we restrict to real coefficient and real roots? 
And can one still maintain average-case polynomial-time with 
more general probability measures?  

We show the answer is yes when $F$ is instead a binomial system ---   
a case whose numerical solution is a key step in polyhedral homotopy 
algorithms for solving arbitrary polynomial systems.  
We give a deterministic 
algorithm that finds a real approximate root (or correctly decides there are 
none) using just
$O(n^2(\log(n)+\log\max_i d_i))$ arithmetic operations on average. 
Furthermore, our approach allows Gaussians with arbitrary variance.
We also discuss briefly the obstructions to maintaining 
average-case time polynomial in $n\log \max_i d_i$ when $F$ has more terms. 
\end{abstract} 

\begin{CCSXML}
<ccs2012>
<concept>
<concept_id>10003752.10003777.10003783</concept_id>
<concept_desc>Theory of computation~Algebraic complexity theory</concept_desc>
<concept_significance>500</concept_significance>
</concept>
</ccs2012>

\end{CCSXML}

\ccsdesc[500]{Theory of computation~Algebraic complexity theory}

\keywords{Smale's 17th Problem, real roots, sparse polynomial, 
Newton iteration, approximate root } 

\maketitle 

\section{Introduction}  
Polynomial system solving has occupied a good portion of research 
in algebraic geometry for centuries, and inspired numerous algorithms 
in engineering and optimization. In recent years, {\em homotopy continuation}   
(see, e.g., \cite{ms,lw91,li97,sw,bhsw}) 
has emerged as one of the most practical and efficient approaches to leverage  
high performance computing for the approximation of roots of large polynomial 
systems. A refinement particularly useful for sparse systems is 
{\em polyhedral homotopy} \cite{hs,verschelde,leeli}. To be brutally 
concise, polyhedral homotopy reduces the solution of an arbitrary polynomial 
system to (a) solving a finite collection of {\em binomial} systems to 
high precision and then (b) iterating a finite collection of rational 
functions. 

It is thus important to have rigorous and, ideally, optimal 
complexity estimates for solving binomial systems. 
Since solving arbitrary polynomial systems is a numerical problem involving 
solutions of unknown minimal spacing, we will need to incorporate the 
cost of approximating well enough to distinguish distinct solutions. 
A recent and elegant way to handle this is via the notion of {\em approximate 
root in the sense of Smale}. In what follows, we use $|\cdot|$ for the 
standard $\ell_2$-norm on $\Cn$. 
\begin{dfn} \cite{smale,bcss} 
Given any analytic function $F : \Cn \longrightarrow \Cn$, we 
define the {\em Newton endomorphism of $F$} to be 
$N_F(z)\!:=\!z-F'(z)^{-1}F(z)$, where we think of $F(z)$ as a 
column vector and we identify the derivative $F'(z)$ with the 
matrix of partial derivatives $\left. \left[\frac{\partial f_i}{\partial x_j}
\right]\right|_{x=z}$. We call $\zeta\!\in\!\Cn$ a {\em non-degenerate} root of 
$F$ if and only if $F'(\zeta)$ is invertible. Given  
$z_0\!\in\!\Cn$, we then define its {\em sequence of Newton iterates} 
$(z_n)_{n\in\N\cup\{0\}}$ via the recurrence $z_{n+1}\!:=\!N_F(z_n)$ 
(for all $n\!\geq\!0$). We then call $z_0$ {\em an approximate root of 
$F$ in the sense of Smale (with associated true root $\zeta$)} if and only 
if $F$ has a 
non-degenerate root $\zeta\!\in\!\Cn$ satisfying $|z_n-\zeta|\!
\leq\!\left(\frac{1}{2}\right)^{2^{n-1}}|z_0-\zeta|$ for all 
$n\!\geq\!1$. \dia 
\end{dfn} 

\noindent 
In essence, once one has an approximate root in the sense above, one can 
easily compute coordinates within any desired $\eps\!>\!0$ of the 
coordinates of a {\em true} root,  
simply by computing $O\!\left(\log \log \frac{1}{\eps}\right)$ 
Newton iterates. The special case $F(z_1)\!:=\!z^2_1-2$ already 
shows that one needs $\Omega\!\left(\log \log \frac{1}{\eps}\right)$ 
arithmetic operations to compute $\sqrt{2}$ within $\eps$ \cite{bshouty}. 
So one can arguably consider an approximate root to be the gold standard 
for specifying a true root. In particular, one no longer has to worry 
about finding the minimal root spacing of $F$ (to get the right 
$\eps$ for approximations within $\eps$), since an approximate 
root in the sense of Smale is guaranteed to converge optimally fast to a 
unique true root. 

Of course, this begs the question of how one can possibly find an 
approximate root. This is the crux of Smale's 17$\thth$ Problem 
(see \cite{21a,21b} and Section \ref{sub:s17} below), 
which was recently positively solved 
by Lairez \cite{lairez}. (See also the seminal work of Beltran and Shub 
\cite{bezout6,bezout7}, Beltran and Pardo 
\cite{bps17a,bps17b,bps17c} and B\"{u}rgisser and Cucker \cite{bc17}.) 
Roughly, Lairez's discovery was an algorithm that,  
for a certain class of {\em random} polynomial systems, finds a single 
(complex) approximate root 
in polynomial-time on average. We now introduce some more terminology 
to be precise: 
\begin{dfn} 
Suppose $\cA_1,\ldots,\cA_n\!\subset\!\Zn$ are finite 
subsets and $\{c_{i,a} \; | \; i\!\in\!\{1,\ldots,n\} \text{ and } 
a\!\in\!\cA_i \text{ for all } i\}$ is a collection of independent 
complex Gaussians with mean $0$ and the variance of $c_{i,a}$ equal to 
$v_{i,a}$. Letting $a\!:=\!(a_1,\ldots,a_n)$, 
$x^a\!:=\!x^{a_1}_1\cdots x^{a_n}_n$, and  
$f_i(x)\!:=\!\sum_{a \in \cA_i} c_{i,a} x^a$, we call 
$F\!:=\!(f_1,\ldots,f_n)$ an {\em $n\times n$ random polynomial 
system with support $(\cA_1,\ldots,\cA_n)$}. \dia 
\end{dfn}  
\begin{lai} \cite[Thm.\ 23]{lairez}\footnote{We have paraphrased a bit: 
Lairez's main theorem is stated in terms of homogeneous polynomials, 
and he counts square roots as arithmetic operations as well. 
Via the techniques of, say,  
\cite{bps17b}, one can easily derive our affine statement.}  
Following the notation above, let $d_1,\ldots,d_n\!\in\!\N$, 
$\cA_i\!:=\!\{(a_1,\ldots,a_n)\!\in\!(\N\cup\{0\})^n\; | \; 
\sum^n_{j=1} a_j\!\leq\!d_i\}$ for all $i$, and 
$v_{i,a}\!:=\!\frac{d_i!}{a_1!\cdots a_n! \left(d_i-\sum^n_{j=1}a_j\right)!}$. 
Then one can find a (complex) approximate root of $F$ using 
just $O(nd^{3/2}N(N+n^3))$ arithmetic operations on average, where  
$N\!:=\!\sum^n_{i=1}\frac{(d_i+n)!}{d_i!n!}$ and $d\!:=\!\max_i d_i$. \qed  
\end{lai}  

\noindent 
Note that restricting the support $(\cA_1,\ldots,\cA_n)$ is a 
way to consider {\em sparsity} for one's polynomial system. 
In particular, one can think of Lairez's Theorem as solving 
Smale's 17$\thth$ Problem in the ``dense'' case, since Lairez assumes 
that {\em all} monomial terms up to a given degree appear (with probability 
$1$) in each polynomial $f_i$. Indeed, one should note that Smale never 
specified what kind of probability measure one should use 
in his 17$\thth$ Problem \cite{21a,21b}. So Smale's 17$\thth$ Problem 
actually includes sparse systems if some of the 
random coefficients have mean, and all higher moments, equal to $0$.  
Smale also observed that one can pose a more difficult analogue of 
his 17$\thth$ problem over the real numbers. 

Observe that $\sum^n_{i=1}\frac{(d_i+n)!}{d_i!n!}$ is exactly 
the maximal possible total number of monomial terms in an $n\times n$ 
polynomial 
system where $f_i$ has degree $d_i$. Note also that just evaluating 
a monomial of degree $d$ takes $\Omega(\log d)$ arithmetic operations: 
Simply consider the straight-line program complexity of the integer 
$2^d$ (see, e.g., \cite{brauer,svaiter,moreira}). One should pay attention to 
the evaluation complexity of $F$ since Lairez's algorithm uses  
Newton iteration, which in turn requires evaluating $F$ (and $F'$) 
many times. So one can then naturally ask, in the spirit of real fewnomial 
theory \cite{kho}: Can one find a real approximate root of $F$ (or decide 
whether there is no real root) using, say, $(t\log d)^{O(1)}$ arithmetic 
operations on average, when $t$ is the total number of monomial terms of $F$ 
and $d\!:=\!\max_i d_i$? This would be a significant new speed-up. For 
instance, the special case $t\!=\!O(n)$ is already  
quite non-trivial since there are standard algebraic tricks (e.g., 
the bottom of the first page of \cite{es}) to reduce arbitrary 
polynomial systems to trinomial systems.   

Our first main theorem thus solves a special case of a 
refined version of Smale's 17$\thth$ Problem, and serves as a starting point 
for a deeper study of the randomized complexity of solving arbitrary real 
sparse polynomial systems. 
\begin{thm} 
\label{thm:first} 
Suppose $A\!=\![a_{i,j}]\!\in\!\Z^{n\times n}$ has  
nonzero determinant, and all the entries of $A$ have absolute value 
at most $d$. Suppose also that $c_{i,j}$ 
is an independent real Gaussian with mean $0$ and fixed 
(but otherwise arbitrary) variance, for each  
$(i,j)\!\in\!\{1,\ldots,n\} \times \{0,1\}$. Let $F\!:=\!(f_1,\ldots,f_n)$ 
with $f_i(x)\!:=\!c_{i,0}+c_{i,1}\cdot x^{a_{1,1}}_1\cdots x^{a_{1,n}}_n$. 
Then, on average, one can find a real approximate root of $F$ (or 
correctly determine there are no real roots) using just 
$O(n^2\log(nd))$ arithmetic operations 
and $O(n^{\omega+1}\log^2(dn))$ bit operations, where $\omega$ 
is any upper bound on the matrix multiplication exponent.   
\end{thm}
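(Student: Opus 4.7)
The plan is to exploit the binomial structure to reduce $F(x)=0$ to two decoupled linear systems --- one real (for the magnitudes) and one over $\F_2$ (for the signs) --- each of size $n$.

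\textbf{Reduction.} Writing $a_i$ for the $i$-th row of $A$, the equation $f_i(x)=0$ is equivalent to $x^{a_i}=-c_{i,0}/c_{i,1}$. Setting $y_j=\log|x_j|$ and $b_i=\log|c_{i,0}/c_{i,1}|$ yields the real linear system $A\mathbf{y}=\mathbf{b}$, which has a unique solution since $\det A\neq 0$. Setting $s_j\in\{0,1\}$ to encode $\sign(x_j)$ and $t_i\in\{0,1\}$ to encode $\sign(-c_{i,0}/c_{i,1})$ yields $A\mathbf{s}\equiv\mathbf{t}\pmod 2$; a real root exists iff this $\F_2$-system is consistent, and is unique (up to the kernel of $A\bmod 2$, resolved by fixing an arbitrary preimage) when it is.

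\textbf{Algorithm.} A deterministic preprocessing stage depending only on $A$ computes $A^{-1}\in\Q^{n\times n}$ via fast exact linear algebra, together with a reduced form of $A\bmod 2$. By Hadamard's inequality the entries of $A^{-1}$ have bit-length $O(n\log(nd))$, giving preprocessing cost $O(n^{\omega+1}\log^2(nd))$ bit operations. For each random sample of coefficients, first read off $\mathbf{t}$ and test consistency of the mod-$2$ system; if inconsistent, output ``no real root''. Otherwise recover the signs $\sigma_j\in\{\pm 1\}$, compute $b_i=\log|c_{i,0}/c_{i,1}|$ to working precision $\Theta(\log(nd))$, form $\mathbf{y}=A^{-1}\mathbf{b}$ by one matrix-vector multiplication ($O(n^2)$ arithmetic operations), and set $\hat x_j:=\sigma_j e^{y_j}$.

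\textbf{Certification and complexity.} Certify that $\hat x$ is an approximate root in Smale's sense via the $\alpha$-criterion. For a binomial system the Jacobian factors as $F'(\hat x)=\mathrm{diag}(c_{i,1}\hat x^{a_i})\cdot A\cdot\mathrm{diag}(1/\hat x_j)$, and higher derivatives share the same rational-monomial structure, yielding a clean bound $\gamma(F,\hat x)=O(d/\min_j|\hat x_j|)$. Evaluating the monomials $\hat x^{a_i}$ by repeated squaring takes $O(n^2\log d)$ operations and dominates the $O(n^2)$ linear-algebra cost. The average-case analysis uses only that each $\log|c_{i,j}|$ is a sub-exponential random variable: $\E[\max_i|b_i|]=O(\log n)$, so the precision and total operation count are controlled by $O(n^2\log(nd))$ in expectation, independently of the (arbitrary but fixed) variances of the $c_{i,j}$.

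\textbf{Main obstacle.} The technical heart is the certification: showing that an $O(\log(nd))$-bit solution of the log-linearized system, after exponentiation, produces a point inside the basin of quadratic convergence of Newton's method at the true root $\zeta$. Because $\|A^{-1}\|$ may be as large as $(nd)^{O(n)}$, the coordinates $|\zeta_j|$ can be doubly-exponentially large or small in $n$, so $\hat x$ must be stored and manipulated in a floating-point format whose exponent fields carry $O(n\log(nd))$ bits. Establishing that the forward error on $\mathbf{y}$ composes with the $\gamma$-bound above to yield $\alpha(F,\hat x)<\alpha_0$ requires a careful accounting that relates log-space accuracy to the Newton-basin radius --- this is what drives the precision bookkeeping underlying both complexity bounds in the theorem.
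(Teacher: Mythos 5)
Your log-linearization is the right first move and is essentially the same reduction the paper performs (the paper realizes it via a Smith factorization $UAV=S$ rather than via $A^{-1}$ over $\Q$, which keeps everything inside integer monomial changes of variables; your $\F_2$ sign analysis matches the paper's orthant test). But there are two genuine gaps. First, your algorithm uses $\log$ and $\exp$ as if they were unit-cost primitives: you ``compute $b_i=\log|c_{i,0}/c_{i,1}|$'' and output $\hat x_j=\sigma_j e^{y_j}$. In the BSS-over-$\R$ model in which Smale's 17th Problem is posed, only field operations and comparisons are allowed, and approximating $\log$ and $\exp$ to the accuracy your certification needs is itself a nontrivial cost --- indeed the paper's entire design routes around this by applying Ye's hybrid binary-search/Newton algorithm directly to each decoupled univariate binomial $z^{s_{i,i}}-\gamma_i$, at a cost of $O\!\left(\log s_{i,i}+\log\log\max\{\gamma_i,\gamma_i^{-1}\}\right)$ field operations, with no transcendental evaluations at all. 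Relatedly, your claimed working precision $\Theta(\log(nd))$ for $\mathbf{b}$ is inconsistent with your own observation that $\|A^{-1}\|$ can be $(nd)^{O(n)}$: a forward error of size $\eps$ in $\mathbf{b}$ can be amplified to $(nd)^{O(n)}\eps$ in $\mathbf{y}$, so you would need $O(n\log(nd))$ bits of $\mathbf{b}$, not $O(\log(nd))$.

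Second, the part you label the ``Main obstacle'' is not an obstacle you have deferred --- it is the proof. The certification that the computed point is an approximate root in Smale's sense is supplied in the paper by a prior lemma on univariate binomials together with explicit distortion bounds ($\max_i|\log|\gamma_i||\leq n^{4+3n/2}d^{3n}\sigma$, etc.) quantifying how the Smith change of variables moves coefficients and roots. More importantly, the average-case bound hinges on controlling $\E\left[\log\log\left(e\max\{|\gamma_i|,|\gamma_i|^{-1}\}\right)\right]$ where $\gamma_i$ is a \emph{monomial in Gaussians with integer exponents as large as $(nd)^{O(n)}$}; showing this expectation is only $O(n\log(nd))$ --- despite $|\gamma_i|$ having doubly-exponential spread --- is the paper's main probabilistic contribution (via moment comparisons with symmetric exponentials, the Gluskin--Kwapie\'n/Lata{\l}a machinery, and log-concavity). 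Your substitute estimate $\E[\max_i|b_i|]=O(\log n)$ concerns only the original coefficient ratios, not the transformed quantities whose $\log\log$ actually enters the running time of the root-finding step, so the expectation analysis that makes the theorem true is missing.
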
 

\noindent 
We prove Theorem \ref{thm:first} in Section \ref{sec:proof}. 
The best current upper bound on $\omega$, as of January 2019, 
is $2.372873$ \cite{vaswil}.  
A fundamental ingredient behind our proof of 
Theorem \ref{thm:first} is a hybrid algorithm of Ye enabling 
the quick approximation of rational powers of a real number 
\cite{ye}, combined with some classic results on fast linear 
algebra over $\Z$ \cite{smith,storjophd}. 
A final key ingredient is estimating  
the expected value of linear combinations 
of logarithms of absolute values of standard real Gaussians 
(Proposition \ref{prop:Q3-answer} in Section \ref{sub:final} below). We 
were unable to find any explicit asymptotics for such expectations, so we 
derive these from scratch in the latter half of Section \ref{sec:back} 
and Section \ref{sec:prob}. 

We will explain some of the subtleties behind extending Theorem 
\ref{thm:first} to systems with arbitrary supports in Section \ref{sub:subtle} 
below. First, however, let us briefly review the original statement of 
Smale's 17$\thth$ Problem. 

\subsection{Quick Review of Smale's 17$\thth$ Problem} 
\label{sub:s17} 
Smale's 17$\thth$ Problem \cite{21a,21b} elegantly summarizes the subtleties 
behind polynomial system solving: 
\begin{quote} 
Can {\bf a zero} of $n$ complex polynomial equations 
in $n$ unknowns be {\bf found approximately}, 
{\bf on the average}, in polynomial-time with a uniform algorithm? 
[Emphases added.] 
\end{quote}    

We clarify the notion of ``polynomial-time'' below. As 
motivation, let us first see how the emphasized  
terms highlight fundamental difficulties in polynomial system solving: 
\begin{itemize}
\item[]{\mbox{}\hspace{-1cm}{\bf ``a zero'':} We can not expect a fast 
algorithm approximating 
{\em all} the roots since, for $n\!\geq\!2$, there may be infinitely many. 
In which case, for $d_1\!\geq\!3$ (e.g., the case of elliptic curves 
\cite{silvermantate}), the roots 
will likely not admit a rational parametrization. When there are only finitely 
many roots, systems like\linebreak 
$(x^2_1-1,\ldots,x^2_n-1)$ show that the number of 
roots can be exponential in $n$.} 
\item[]{\mbox{}\hspace{-1cm}{\bf ``found approximately'':}  
Even restricting to integer 
coefficients, the number of digits of accuracy needed to separate distinct 
roots can be exponential in $n$, e.g.,\\ 
\mbox{}\hfill $((2x_1-1)(3x_1-1),x_2-x^2_1,
\ldots,x_n-x^2_{n-1})$\hfill\mbox{}\\ 
has roots with $n\thth$ coordinates $\frac{1}{2^{2^{n-1}}}$ and 
$\frac{1}{3^{2^{n-1}}}$. So, especially for irrational coefficients, 
we need a more robust notion of approximation than digits of accuracy. 
(Hence's Smale's definition of approximate root from \cite{smale}.) }  
\item[]{\mbox{}\hspace{-1cm}{\bf ``on the average'':} Restricting to integer 
coefficients, distinguishing between a system having finitely many or 
infinitely many roots is $\np$-hard (see, e.g., \cite{plaisted,koidim}). 
Furthermore, as already long known in the numerical linear algebra community 
(e.g., results on the distribution of eigenvalues of random matrices 
\cite{edelman,taovu}), even if the number of roots is finite, 
the accuracy needed to separate distinct roots can vary wildly as a function of 
the coefficients. So averaging over all inputs allows us to amortize  
the complexity of potentially intractable instances. } 
\end{itemize} 

The original statement of Smale's 17$\thth$ Problem measures {\em time} 
(or {\em complexity}) as the total number of (a) (exact) field operations 
over $\R$, (b) comparisons over $\R$, and (c) bit operations 
\cite{21a}. (The underlying computational model is a {\em 
BSS machine over $\R$} \cite{bcss}, which is essentially a classical {\em 
Turing} machine \cite{papa,arorabarak,sipser}, augmented so 
that it can perform any field operation or comparison over $\R$ in one time 
step.) {\em Polynomial-time} was then meant as polynomial in the 
number of (nonzero) coefficients of $F$. Smale interpreted the number 
of coefficients (which can be as high as $\sum^n_{i=1}\binom{d_i+n}{n}$ for 
$F$ as specified above) as the {\em input size}. 

\begin{rem} 
\label{rem:random} 
The precise probability distribution over which 
one averages was never specified in Smale's original statement \cite{21a,21b}. 
In all the literature so far on the problem 
(see, e.g., \cite{21a,21b,bps17a,bps17b,bezout6,bezout7,bps17c,bc17,lairez}), 
the {\em Bombieri-Weyl measure} was used: This is the choice of variances  
involving multinomial coefficients written earlier. \dia 
\end{rem} 

\noindent 
While the Bombieri-Weyl measure satisfies some very nice group invariance 
properties (see, e.g., \cite{kostlan,ss2,shiffman,lerario}), 
there is currently no widely-accepted notion of a 
``natural'' probability distribution for a random polynomial. For instance, 
there are several different distributions of interest already for the matrix 
eigenvalue problem (see, e.g., \cite{edelman,rojasutah,oxford}). More to the 
point, much work has gone into finding useful properties of the roots of 
random polynomials that are distribution independent (see, e.g., 
\cite{bsrandom,taovu2}). 

The meaning of {\em uniform algorithm} is more technical and is 
formalized in \cite{bcss} (see also \cite{papa,arorabarak,sipser} for the 
classical Turing case). Roughy, uniformity refers to having an implementation 
that can handle all input sizes, as opposed to having a different 
implementation for each input size. 

\subsection{Current Obstructions to Fully Incorporating Sparsity}  
\label{sub:subtle} 
As we'll see from the proof of our main theorem, solving 
an $n\times n$ system of Gaussian random binomials of degree 
$d$ can be reduced to solving $n$ univariate binomials 
of degree $(nd)^{O(n)}$, where the underlying coefficients are no 
longer Gaussian but have reasonably estimable means. Algebraically, 
this will imply that the underlying field extension (where one adjoins the 
coordinates of the solutions to the field generated by the coefficients) 
is always a radical extension.  

A natural next step then is to consider {\em $n\times n$ 
unmixed $(n+1)$-nomial systems}:\\  
$(c_{1,0}+c_{1,1}x^{a_1}+\cdots+c_{1,n}x^{a_n},
\ldots,c_{n,0}+c_{n,1}x^{a_1}+\cdots+c_{n,n}x^{a_n})$, 
where $a_i\!:=\!(a_{1,i},\ldots,a_{n,i})$ for all $i$.    
Via Gauss-Jordan Elimination, one can reduce such a system 
to a binomial system without affecting the roots. Unfortunately, if one 
starts with a system of the form above, with Gaussian $c_{i,j}$, 
the resulting binomial system no longer has Gaussian 
coefficients. So one needs to consider binomial systems with 
coefficient distributions more general than Gaussian, and 
we do this in a sequel to this paper.  

Going a bit farther, $n\times n$ unmixed $(n+2)$-nomial 
systems yield an interesting complication: The underlying 
field extensions need no longer be radical, even if $n\!=\!1$. 
A simple example is $x^5_1-2x_1+10$, which has Galois group 
$S_5$ over $\Q$. However, earlier results from \cite{rojasye} 
indicate that it should be possible to find real approximate roots 
quickly on average, at least for univariate trinomials. (One 
should also observe Sagraloff's recent dramatic speed-ups 
for the worst-case arithmetic complexity of approximating 
real roots of univariate sparse polynomials \cite{sagra}.) We 
conjecture that finding a real approximate root (or determining 
that there are no real roots) for a real Gaussian 
$n\times n$ unmixed $(n+2)$-nomial system is still possible 
in time $(n\log d)^{O(1)}$ on average, and hope to address 
this problem in the future. 

\section{Background}  
\label{sec:back} 
In what follows, for any $n\times n$ matrix $A\!\in\!\Z^{n\times n}$,
we define $x^A$ to be the vector of monomials
$\left(x^{a_{1,1}}_1\cdots x^{a_{n,1}}_n,\ldots,x^{a_{1,n}}_1
\cdots x^{a_{n,n}}_n\right)$. We call the substitution $x\!=\!z^A$ 
a {\em monomial change of variables}. The following proposition is 
elementary.  
\begin{prop}
\label{prop:mono}
We have that $x^{AB}\!=\!(x^A)^B$ for any $A,B\!\in\!\Z^{n\times n}$.
Also, for any field $K$, the map defined by $m(x)\!=\!x^U$,
for any unimodular matrix $U\!\in\!\Z^{n\times n}$, is an automorphism of
$(K^*)^n$. \qed
\end{prop}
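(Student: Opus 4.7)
The plan is to unwind the definitions and compare exponents coordinate by coordinate in each of the two claims. For the first statement, I would write out the $\ell$-th component of each side explicitly. By the definition of $x^A$, the $j$-th coordinate of $x^A$ is $\prod_{k=1}^n x_k^{a_{k,j}}$, so the $\ell$-th coordinate of $(x^A)^B$ is $\prod_{j=1}^n \left(\prod_{k=1}^n x_k^{a_{k,j}}\right)^{b_{j,\ell}} = \prod_{k=1}^n x_k^{\sum_{j=1}^n a_{k,j} b_{j,\ell}}$. The inner sum is exactly $(AB)_{k,\ell}$, so the right-hand side equals $\prod_{k=1}^n x_k^{(AB)_{k,\ell}}$, which is by definition the $\ell$-th coordinate of $x^{AB}$. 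Interchanging the two finite products is the only content here, and yields $x^{AB} = (x^A)^B$.

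For the second statement, the restriction to $(K^*)^n$ (rather than all of $K^n$) is essential, since a unimodular $U$ can have negative entries, and $m(x) = x^U$ then involves negative integer exponents. First I would verify that $m$ maps $(K^*)^n$ into itself, which is immediate because each coordinate of $m(x)$ is a finite product of nonzero elements of $K$ raised to integer powers, hence lies in $K^*$. Next, a short computation shows that $m$ is a group homomorphism with respect to coordinatewise multiplication: for $x, y \in (K^*)^n$, the $j$-th coordinate of $m(xy)$ is $\prod_{k=1}^n (x_k y_k)^{u_{k,j}} = \prod_{k=1}^n x_k^{u_{k,j}} \cdot \prod_{k=1}^n y_k^{u_{k,j}} = m(x)_j \, m(y)_j$.

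The main (and only non-trivial) step is producing a two-sided inverse for $m$. Here I would invoke unimodularity: since $\det U = \pm 1$, Cramer's rule (equivalently, the adjugate formula) implies $U^{-1} \in \Z^{n \times n}$, so the map $m'(x) := x^{U^{-1}}$ is well defined on $(K^*)^n$. Applying the first part of the proposition twice gives $m(m'(x)) = (x^{U^{-1}})^U = x^{U^{-1}U} = x^I = x$ and $m'(m(x)) = (x^U)^{U^{-1}} = x^{UU^{-1}} = x^I = x$, where I would quickly note that $x^I$ is literally $x$ since the $j$-th coordinate of $x^I$ is $\prod_{k=1}^n x_k^{\delta_{k,j}} = x_j$. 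Thus $m$ is a bijective group homomorphism of $(K^*)^n$, i.e., an automorphism. No step here involves an inequality or an estimate, so there is no real obstacle beyond careful bookkeeping of indices; the key conceptual ingredient is simply that unimodularity is exactly what keeps both $U$ and $U^{-1}$ inside $\Z^{n \times n}$.
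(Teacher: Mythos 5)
Your proof is correct and complete. The paper states this proposition as ``elementary'' and omits the proof entirely (the \qed appears directly in the statement), so there is no written argument to compare against; what you have written is exactly the standard direct verification one would supply, with the right emphasis on the only non-obvious point, namely that unimodularity is what keeps $U^{-1}$ integral so that $x \mapsto x^{U^{-1}}$ is well defined on $(K^*)^n$.
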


Our main approach to solving binomial systems is to reduce 
them to systems of the form $(x^{d_1}_1-c_1,\ldots,x^{d_n}_n-c_n)$ 
via a monomial change of variables, and then prove that the 
distortion of the $c_i$ resulting from perturbing the original 
coefficients is controllable. Later on, we will also detail 
how a Gaussian distribution on the original coefficients implies 
that the $c_i$ still have well-behaved distributions. But now 
we will focus on quantifying our monomial changes of variables. 

\subsection{Linear Algebra Over $\Z$} 
\begin{dfn}
\label{dfn:smith}
Let $\gln(\Z)$ denote the
set of all matrices in $\Z^{n\times n}$ with determinant $\pm 1$
(the set of {\em unimodular} matrices).
Given any $M\!\in\!\Z^{n\times n}$, we call any 
identity of the form $UMV\!=\!S$ with $U,V\!\in\!\gln(\Z)$
and $S$ diagonal a {\em Smith factorization}. In particular, if
$S\!=\![s_{i,j}]$ and we
require additionally that $s_{i,i}\!\geq\!0$ and $s_{i,i}|s_{i+1,i+1}$
for all $i\!\in\!\{1,\ldots,n\}$ (setting $s_{n+1,n+1}\!:=\!0$), then $S$
is uniquely determined and is called {\em \underline{the} Smith
normal form} of $M$. \dia 
\end{dfn}
\begin{thm}
\label{thm:unimod}
\cite[Ch.\ 6 \& 8, pg.\ 128]{storjophd}
For any $A\!=\![a_{i,j}]\!\in\!\Z^{n\times n}$, a Smith
factorization of $A$ yielding the Smith normal form of $A$ 
can be computed within\\ 
\mbox{}\hfill $O\!\left(n^{\omega+1} 
\log^2(n\max_{i,j}|a_{i,j}|)\right)$\hfill\mbox{}\\ 
bit operations.
Furthermore, the entries of all matrices in the underlying 
factorization have bit size $O(n\log(n\max_{i,j}|a_{i,j}|))$. \qed
\end{thm}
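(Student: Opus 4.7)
The plan is to prove Theorem \ref{thm:unimod} via a two-phase strategy: first compute a Hermite normal form (HNF) of $A$ using modular arithmetic and fast matrix multiplication, then transform the HNF into the Smith normal form by inexpensive $2\times 2$ unimodular corrections.

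For the first phase, I would produce $U_1\!\in\!\gln(\Z)$ with $H\!:=\!U_1 A$ upper-triangular and column-normalized. By Hadamard's inequality, $|\det A|\!\leq\!(n\max_{i,j}|a_{i,j}|)^n$, so every entry of $H$ has bit size $O(n\log(n\max_{i,j}|a_{i,j}|))$. Unmoderated column elimination would cause intermediate entries to blow up exponentially, so I would carry out the reduction modulo $d\!:=\!|\det A|$ (computed via a cheap preprocessing step, or bootstrapped by a small multiple), which pins all intermediate quantities to the same bit-size regime. A block-recursive organization of the elimination then reduces the underlying linear algebra to $O(n^\omega)$ ring operations; multiplying by the bit cost per operation on integers of size $O(n\log(n\max_{i,j}|a_{i,j}|))$ yields the claimed bit bound. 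The transformation $U_1$ itself is assembled in tandem via extended-GCD back-substitution, with entries size-controlled by the same modular analysis.

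For the second phase, I would right-multiply $H$ by a unimodular $V_1$ that kills the above-diagonal entries, producing a diagonal matrix $D$. To enforce the chain $s_{i,i}\mid s_{i+1,i+1}$, I would iterate \emph{pairwise Smith corrections}: each adjacent pair $(a,b)$ violating the divisibility condition is replaced by $(\gcd(a,b),\mathrm{lcm}(a,b))$ via a $2\times 2$ unimodular block derived from the extended Euclidean algorithm. A standard telescoping argument shows that $O(n^2)$ such corrections suffice, and their cumulative bit cost is dominated by the HNF phase. Setting $U\!:=\!U_2 U_1$ and $V\!:=\!V_1 V_2$ produces the required Smith factorization, and the bit-size claim on the entries of $U,V,S$ follows directly from the modular control used in both phases.

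The main obstacle is simultaneously preventing coefficient explosion during elimination and preserving the fast-matrix-multiplication speed-up. Classical HNF algorithms suffer either from intermediate entry blow-up (ruining bit complexity) or from being too inherently serial to leverage $n^\omega$ arithmetic. The key technical device, following Storjohann, is to run the block elimination entirely modulo $d$ and then lift the result to the true HNF by controlled back-substitution, all while keeping the accumulating unimodular transformations size-bounded. Integrating this modular control with a block-recursive elimination is the delicate part, and is precisely what yields the stated $O(n^{\omega+1}\log^2(n\max_{i,j}|a_{i,j}|))$ bit bound.
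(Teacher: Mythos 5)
The paper does not prove this statement at all: it is quoted directly from Storjohann's thesis (the \qed follows the citation), so there is no internal argument to compare yours against, and the review must judge your sketch on its own terms. Your outline has the right general flavor --- modular reduction to control coefficient growth, block elimination to reach $O(n^\omega)$ ring operations, triangularization followed by diagonalization and a divisibility-chain repair --- but one step is genuinely broken as written. You cannot ``right-multiply $H$ by a unimodular $V_1$ that kills the above-diagonal entries'': for an upper-triangular integer matrix, column operations can only reduce $h_{ij}$ modulo $h_{ii}$, not annihilate it. Already the matrix with rows $(2,1)$ and $(0,3)$ is in Hermite form and cannot be diagonalized by column operations alone (its Smith form is $\mathrm{diag}(1,6)$). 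Diagonalization requires alternating row and column reductions together with a termination argument, and bounding the number of such rounds while keeping the accumulating transforms small is precisely the hard content of the theorem; your sketch skips it entirely.

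The cost accounting is also not sound. Each $2\times 2$ correction must be applied to a full row of the accumulated $U$ and a full column of the accumulated $V$, i.e.\ $O(n)$ multiplications of integers of bit size $O(n\log(n\max_{i,j}|a_{i,j}|))$; with $O(n^2)$ corrections this is $O(n^3)$ such multiplications, which is $\Omega(n^5\log^2(\cdot))$ with classical integer arithmetic and still roughly $n^4$ (up to log factors) with fast multiplication --- in either case above the target $O(n^{\omega+1}\log^2(\cdot))$, since $\omega+1<4$. So the claim that the repair phase is ``dominated by the HNF phase'' is exactly the point that needs proof; computing the multipliers $U,V$ within the stated bound (as opposed to the Smith form alone, which is cheaper) is what Storjohann's Chapters 6 and 8 are devoted to. A smaller issue: the theorem is stated for arbitrary $A\in\Z^{n\times n}$, including singular ones, where your device of reducing modulo $d:=|\det A|$ is vacuous.
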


\subsection{From Approximate Roots of Univariate Binomials to Systems} 
We begin with an important observation from 
the middle author's doctoral dissertation, building upon 
earlier work of Smale \cite{smale} and Ye \cite{ye}. 
\begin{lemma} \cite[Thm.\ 4.10]{kaitlyn} 
\label{lemma:uni} 
Let $d\!\in\!\N$ satisfy $d\!\geq\!2$,  
$c\!>\!0$, and $f(x_1)\!:=\!x^d_1-c$. Then we can 
find an approximate root of $f$ using 
$O\!\left(\log(d)+\log\log\max\{c,c^{-1}\}\right)$ 
field operations over $\R$. \qed 
\end{lemma}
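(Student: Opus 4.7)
The plan is to exhibit a suitable approximate root $z_0$ by directly approximating the positive real root $\zeta\!:=\!c^{1/d}$ of $f$, and then to invoke Smale's classical $\gamma$-theorem (from \cite{smale,bcss}) to certify that $z_0$ is an approximate root in the sense of Smale. In the univariate setting, the $\gamma$-theorem asserts that any $z_0\!\in\!\R$ with $|z_0-\zeta|\cdot\gamma(f,\zeta)\!\leq\!\alpha_0$ is automatically an approximate root of $f$ with associated true root $\zeta$, where $\alpha_0$ is an explicit absolute constant (about $0.1774$) and
\[ \gamma(f,\zeta)\!:=\!\sup_{k\geq 2}\left|\frac{f^{(k)}(\zeta)}{k!\,f'(\zeta)}\right|^{1/(k-1)}. \]

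Next I would estimate $\gamma(f,\zeta)$ for $f(x_1)\!=\!x^d_1-c$ by direct differentiation: one computes $\frac{f^{(k)}(\zeta)}{k!\,f'(\zeta)}\!=\!\frac{\binom{d-1}{k-1}}{k\,\zeta^{k-1}}$ for $2\!\leq\!k\!\leq\!d$ (and $0$ for $k\!>\!d$), and the elementary bound $\binom{d-1}{k-1}\!\leq\!d^{k-1}$ then gives $\gamma(f,\zeta)\!\leq\!d/\zeta$. Hence it suffices to compute $z_0\!\in\!\R$ with $|z_0-\zeta|\!\leq\!\alpha_0\zeta/d$, i.e.\ a \emph{relative} approximation of $c^{1/d}$ to precision $\Theta(1/d)$.

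The final step is to invoke the hybrid algorithm of Ye \cite{ye} for fast approximation of rational powers of positive reals: given $c\!>\!0$ and $d\!\geq\!2$, Ye's algorithm produces $z_0\!>\!0$ with $|z_0/\zeta-1|\!\leq\!\eps$ using $O(\log d + \log\log(1/\eps) + \log\log\max\{c,c^{-1}\})$ field operations over $\R$. Setting $\eps\!:=\!\alpha_0/d$ yields $\log\log(1/\eps)\!=\!O(\log\log d)$, which is dominated by $\log d$, so the overall count reduces to the claimed $O(\log d+\log\log\max\{c,c^{-1}\})$ field operations.

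The main technical obstacle is simply matching the precise statement of Ye's theorem to the tolerance $\alpha_0\zeta/d$ and confirming that the $\log\log\max\{c,c^{-1}\}$ term really does arise from a single range-reduction phase (bringing $c$ into a bounded interval where Newton iteration on $x^d-c$ converges quadratically) that is independent of the subsequent $O(\log d)$-cost refinement on $d$. Once these identifications are in place, the bound $\gamma(f,\zeta)\!\leq\!d/\zeta$ and Smale's $\gamma$-theorem combine to finish the argument in a few lines.
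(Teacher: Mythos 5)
Your proposal is correct and takes essentially the same route the paper relies on: the lemma is cited from \cite{kaitlyn} rather than reproved here, and that proof likewise combines Ye's hybrid binary-search/Newton algorithm \cite{ye} for a relative approximation of $c^{1/d}$ with Smale's $\gamma$-theorem \cite{smale,bcss}, exactly as you do. Your computation $\gamma(f,\zeta)\leq d/\zeta$ and the resulting tolerance $\Theta(\zeta/d)$ are the correct certification step, so nothing further is needed.
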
 

Since a monomial change of variables enables us to 
replace an arbitrary binomial system by a simpler, 
{\em diagonal} system of univariate binomials, it's 
enough to bound how the coefficients are distorted 
under such a change of variables. The following 
lemma gives us the bounds we need. 
\begin{lemma}  
\label{lemma:distort} 
Suppose $c_1,\ldots,c_n\!\in\!\Cs$ and 
$A\!\in\!\Z^{n\times n}$ has columns $a_1,\ldots,a_n$ 
and entries of absolute value at most $d$. Also 
let $\sigma\!:=\!\max_i\{|\log|c_i||\}$, 
let $UAV\!=\!S$ be the Smith Factorization of $A$, and 
let $(\gamma_1,\ldots,\gamma_n)\!:=\!(c_1,\ldots,c_n)^V$. 
Then the following bounds hold: \\ 
\mbox{}\hspace{.5cm}1. $\max_i|\log|\gamma_i||\!\leq\!n^{4+3n/2}d^{3n}
                         \sigma$. \\ 
\mbox{}\hspace{.5cm}2. If $\zeta\!=\!(\zeta_1,\ldots,\zeta_n)\!\in\!\Csn$ 
is a true root of $F$ then\\ 
\mbox{}\hspace{.8cm}$\max_i|\log|\zeta_i||\!\leq\!n^{O(n)}d^{O(n)}\sigma$. 
\hfill \qed 
\end{lemma}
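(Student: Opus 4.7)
The plan is to linearize both the change of variables $(c_1,\ldots,c_n)\mapsto\gamma$ and the root identity $\zeta^A=c$ by passing to $\log|\cdot|$-coordinates, since $\log|(x^B)_j|=\sum_i b_{i,j}\log|x_i|$ for any $B=[b_{i,j}]\in\Z^{n\times n}$. Under this transformation the Smith factorization $UAV=S$ becomes a simultaneous diagonalization of the linear system controlling $|\zeta|$.

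For part 1, the definition $\gamma_j=\prod_i c_i^{v_{i,j}}$ directly yields $|\log|\gamma_j||\leq n\cdot(\max_{i,j}|v_{i,j}|)\cdot\sigma$. The bit-length bound $O(n\log(nd))$ on the entries of $V$ provided by Theorem~\ref{thm:unimod} translates to $|v_{i,j}|\leq(nd)^{O(n)}$. Recovering the explicit form $n^{4+3n/2}d^{3n}$ then amounts to tracking the implicit constant in Storjohann's bound---the most delicate point of the lemma, and essentially where all the genuine bookkeeping lives.

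For part 2, I would first rewrite the binomial system $F(\zeta)=0$ (solvable because $\det A\neq 0$) as $\zeta^A=c$, where $c_i:=-c_{i,0}/c_{i,1}$ still satisfies $\max_i|\log|c_i||=O(\sigma)$. Applying $\log|\cdot|$ componentwise gives the linear system $A^T L_\zeta=L_c$, with $L_\zeta:=(\log|\zeta_i|)_i$ and $L_c:=(\log|c_j|)_j$. Using $A^T=V^{-T}SU^{-T}$ (from $UAV=S$ and $S^T=S$), setting $L_\eta:=U^{-T}L_\zeta$, and noting that $V^TL_c=L_\gamma$ by the formula from part 1, the system collapses to the diagonal identity $SL_\eta=L_\gamma$. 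Since every $s_{j,j}$ is a positive integer, $|(L_\eta)_j|\leq|(L_\gamma)_j|$.

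Finally I would invert to get $L_\zeta=U^TL_\eta$ and estimate $\max_j|\log|\zeta_j||\leq n\cdot\max_{i,j}|u_{i,j}|\cdot\max_i|(L_\gamma)_i|$. Combining the bit-length bound on $|u_{i,j}|$ from Theorem~\ref{thm:unimod} with part 1 yields $\max_j|\log|\zeta_j||\leq n\cdot(nd)^{O(n)}\cdot n^{4+3n/2}d^{3n}\sigma=n^{O(n)}d^{O(n)}\sigma$, as required. The structural content is just Smith normal form in log-coordinates together with a triangle-inequality estimate; the only real obstacle is matching the explicit constant in part 1.
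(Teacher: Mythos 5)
Your proof is correct in structure, and in fact the paper offers no proof of this lemma at all (it is stated with a closing box), so there is nothing to compare against except the intended argument, which is surely the one you give: pass to $\log|\cdot|$-coordinates, observe that $\gamma=c^V$ and $\zeta^A=c$ become the linear identities $L_\gamma=V^TL_c$ and $A^TL_\zeta=L_c$, and use $A^T=V^{-T}SU^{-T}$ to diagonalize, so that $SL_\eta=L_\gamma$ with $L_\eta=U^{-T}L_\zeta$ and $s_{j,j}\geq 1$ (here one needs $\det A\neq 0$, which is implicit since otherwise part 2 is false; you should say so). The one incomplete spot is exactly the one you flag: the explicit constant $n^{4+3n/2}d^{3n}$ in part 1 does \emph{not} follow from Theorem \ref{thm:unimod} as quoted, since a bit-size bound of $O(n\log(n\max_{i,j}|a_{i,j}|))$ with an unspecified implied constant only yields $|v_{i,j}|\leq (nd)^{Cn}$ for some unknown $C$; to get the stated exponents one must extract an explicit entry bound (e.g.\ a Hadamard-type bound of the shape $(\sqrt{n}\,d)^{3n}$ on the unimodular multipliers) from Storjohann's thesis rather than from the theorem statement in the paper. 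Since everything downstream (part 2 and the proof of Theorem \ref{thm:first}) only uses $n^{O(n)}d^{O(n)}\sigma$, this does not affect the main result, but as a proof of part 1 exactly as stated it is a deferral rather than a derivation.
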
 

\subsection{Logs of Absolute Values of Gaussians} 
We now finally address the change in probability distribution resulting 
from replacing a Gaussian coefficient by a monomial in several other 
Gaussians. Our derivation is, necessarily, a bit long. So the hurried reader 
can jump to Propositions \ref{prop:lin} and \ref{prop:Q3-answer}, 
respectively in Sections \ref{sec:prob} and \ref{sub:final} below. 

Let $ \Theta, \Theta_{1}, \Theta_{2}, \ldots $ 
be independent exponential random variables, i.e.,  
$F_{\Theta} (t) := \P ( \Theta \leq t ) = 1- e^{-t}$.  
Let $ L, L_{1}, L_{2}, \ldots $ be independent symmetric exponential random 
variables, i.e., the density of $L$ is given by 
$\rho_{L} ( t)= \frac{1}{ 2} e^{ - |t|} , \ -\infty < t < \infty$,  
and similarly for the $L_i$. 

Let $Z, Z_{1}, Z_{2}, \ldots  $ be independent standard real 
Gaussian random variables, i.e.,  
\begin{equation}
\label{Z-1}
\Phi (t) := \P ( Z\leq t ) := \frac{1}{ \sqrt{2\pi} } \int_{-\infty}^{t} e^{ -\frac{ s^{2}}{ 2} }  ds= :\int_{-\infty}^{t} \phi(s) ds  . 
\end{equation}
Let $ Y, Y_{1},  Y_{2}, \ldots $ be independent random variables such that $ Y_{i} := \log|Z_{i}|$. 
We have that 
$$ F_{Y} (t) :=\P ( Y\leq t ) = \P( |Z|\leq e^{t} ) = \P ( -e^{-t} \leq Z\leq e^{t} ) = 1- 2 \Phi ( -e^{t} ) .$$
Taking derivatives we get $ F_{Y}^{\prime} (t):= 2e^{t} \phi(e^t)$, 
which implies that the density of $Y$, $\rho_{Y}$, is 
\begin{equation}
\label{f-Y}
\rho_{Y} (t) := \sqrt{\frac{2}{\pi}} e^{t}  e^{ - \frac{ e^{2t}}{2}} , \ -\infty <t <\infty. 
\end{equation}
We use $\E$ to denote expectation, $\P$ to denote probability, and define 
\begin{equation}
\label{a}
a:= \mathbb E Y :=\sqrt{\frac{2}{\pi}}\int_{-\infty}^{\infty} t e^{t} e^{-\frac{e^{2t}}{2}} d t  . 
\end{equation}
Note that 
\begin{equation}
\label{a-2}
0<a<e\sqrt{\frac{2}{\pi}} +  2<5.
\end{equation}
Indeed, $ a=\sqrt{\frac{2}{\pi}} \left( \int_{0}^{\infty} t e^{-t} e^{ - \frac{ e^{-2t}}{2}} d t + \int_{0}^{\infty} t e^{t} e^{ - \frac{ e^{2t}}{2}} d t\right)$. 
So clearly $a>0$ and also 
$$ \sqrt{\frac{\pi}{2}} a \leq \int_{0}^{\infty} t e^{-t} d t  + \int_{0}^{1} t e^{t} d t + \int_{1}^{\infty} t e^{t} e^{ - \frac{ e^{2t}}{2}} d t $$
$$ \leq 1+ (e-1) + \int_{e}^{\infty} \log{s} e^{-\frac{s^{2}}{2}} d s \leq e + \sqrt{2\pi}  \int_{e}^{\infty} s^{2}\phi(s) d s \leq e + \sqrt{2\pi} . $$
We define a new, centered (i.e., mean $0$) random variable via  
\begin{equation}
W:= Y- a .
\end{equation}

\noindent We write $A\simeq B$ to indicate that there exist positive 
constants $c_{1}, c_{2}$ with $ c_{1} A \leq B \leq c_{2} A$.  Let 
$ {\bf a} := ( a_{1} , \ldots , a_{k} )\!\in\!\R^k $ and define
\begin{equation}
\label{W} 
W_{{\bf a}} := \sum_{i=1}^{k} a_{i} Y_{i}  ,  \ V_{{\bf a}} := e^{W_{{\bf a}} }  \ {\rm and }  \ X_{{\bf a}} := \max\{ V_{{\bf a}} , V_{{\bf a}}^{-1}\} 
\end{equation}

Using the notation $\|R\|_p:=E(R^p)^{1/p}$, we will prove the following fact: 
\begin{lemma}
\label{moments-W}
\noindent Let $W$ be the centered random variable defined in \eqref{W} and 
let $p\geq 2$. Then
\begin{equation}
\| W\|_{p} \simeq \| \Theta \|_{p} \simeq \|L\|_{p} .
\end{equation}
\end{lemma}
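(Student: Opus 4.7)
The plan is to establish the two equivalences separately. The identity $\|L\|_p = \|\Theta\|_p = \Gamma(p+1)^{1/p}$ follows by direct computation from the Laplace and exponential densities: $\E|L|^p = \int_0^\infty t^p e^{-t}\,dt = \Gamma(p+1)$ and $\E\Theta^p = \Gamma(p+1)$. So the real content of the lemma is the equivalence $\|W\|_p \simeq \Gamma(p+1)^{1/p}$ for $p\ge 2$, which I will prove by separate tail analyses of $W$.

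For the upper bound $\|W\|_p \lesssim \Gamma(p+1)^{1/p}$, I plan to show that $\P(|W|\ge s) \le C_a e^{-s}$ for every $s \ge 0$, where $C_a$ depends only on the constant $a$. The right tail $\P(W\ge s) = 2(1-\Phi(e^{s+a}))$ decays double-exponentially once $e^{s+a} \ge 1$, by Mills' ratio $1-\Phi(x) \le \phi(x)/x$, and is dominated by a multiple of $e^{-s}$ on the bounded interval where $s+a < 0$. The left tail uses the crude density bound $\phi(t) \le 1/\sqrt{2\pi}$: $\P(W\le -s) = \P(|Z| \le e^{a-s}) \le \sqrt{2/\pi}\, e^{a-s}$. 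Combining these via the layer-cake formula $\E|W|^p = p\int_0^\infty s^{p-1}\P(|W|\ge s)\,ds$ yields $\E|W|^p \le C_a\,\Gamma(p+1)$, and then $\|W\|_p \le C\|\Theta\|_p$ since $C_a^{1/p}$ is bounded for $p\ge 2$.

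For the lower bound, I plan to use only the left tail, which is already of the correct exponential order. Since $e^{-e^{2t}/2} \ge e^{-1/2}$ for $t\le 0$, the density satisfies $\rho_Y(t) \ge \sqrt{2/\pi}\,e^{-1/2}\,e^t$ on $(-\infty,0]$. Restricting the expectation to $\{Y \le -2|a|\}$, where $|W|=|Y-a| \ge |Y|/2$, and substituting $u=-Y$ gives
\[ \E|W|^p \ge c_1\, 2^{-p}\int_{2|a|}^\infty u^p e^{-u}\,du = c_1\, 2^{-p}\,\Gamma(p+1, 2|a|), \]
with $c_1 = \sqrt{2/\pi}\, e^{-1/2}$.

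The main obstacle is to compare the upper incomplete gamma $\Gamma(p+1,2|a|)$ with $\Gamma(p+1)$ uniformly in $p\ge 2$. For $p$ larger than some threshold $p_0$ depending on $|a|$ (derivable from Stirling, since $\int_0^{2|a|} u^p e^{-u}\,du \le (2|a|)^{p+1}$ is negligible compared with $\Gamma(p+1) \ge \sqrt{2\pi p}(p/e)^p$), we obtain $\Gamma(p+1,2|a|) \ge \frac{1}{2}\Gamma(p+1)$, yielding $\|W\|_p \ge (c/2)\|\Theta\|_p$. For the residual range $p \in [2,p_0]$, Lyapunov's inequality gives $\|W\|_p \ge \|W\|_2$, a fixed positive constant since $W$ is nondegenerate, while $\|\Theta\|_p \le \|\Theta\|_{p_0}$ is bounded above, so the ratio stays controlled by a positive constant. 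Assembling all the cases gives $\|W\|_p \simeq \|\Theta\|_p$ for $p\ge 2$.
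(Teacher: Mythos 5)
Your proposal is correct and follows essentially the same route as the paper: both arguments rest on the observation that $Y=\log|Z|$ has an exponential left tail (so that $\E|W|^p\gtrsim C^{-p}\,\Gamma(p+1)$) and a doubly exponential right tail (contributing only a bounded multiple of $\Gamma(p+1)$), combined with the exact identity $\E|L|^p=\E\,\Theta^p=\Gamma(p+1)$. The only organizational difference is that the paper extracts bounds uniform in $p$ directly from the moment integrals (via Minkowski and the pointwise bound $e^{-e^{-2t}/2}\ge e^{-t/2}$ for $t\ge 1$), whereas you route the upper bound through tail estimates and the layer-cake formula and close the lower bound for small $p$ with Lyapunov's inequality; both implementations work.
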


\begin{proof}
We have that 
$$ \| W\|_{p}^{p} = \sqrt{\frac{2}{\pi}} \int_{-\infty}^{\infty} | t- a|^{p} e^{t} e^{-\frac{e^{2t}}{2}} d t =$$
$$ \sqrt{\frac{2}{\pi}}  \int_{0}^{\infty} |t+a|^{p} e^{-t} e^{- \frac{e^{-2t}}{2}} dt + \sqrt{\frac{2}{\pi}}  \int_{0}^{\infty} |t-a|^{p} e^{t} e^{- \frac{e^{2t}}{2}} dt  = :  \sqrt{\frac{2}{\pi}} I_{1} + \sqrt{\frac{2}{\pi}}  I_{2} .$$
Note that for $t\geq 1$, $ e^{ -\frac{e^{-2t}}{2}} \geq e^{-\frac{t}{2}}$. 
So using the above we have that 
$$ I_{1} \geq \int_{1}^{\infty} | t + a|^{p} e^{-t}  e^{ -\frac{e^{-2t}}{2}} dt \geq \int_{1}^{\infty} t^{p} e^{-\frac{3t}{2}} dt = \left( \frac{2}{3}\right)^{p+1} \int_{3/2}^{\infty} s^{p}e^{-s} d s =$$
$$ \left( \frac{2}{3}\right)^{p+1} \left( (p+1)! - \int_{0}^{3/2} s^{p} e^{-s}ds\right) \geq \frac{ \| \Theta \|_{p}^{p}}{ 4^{p}},  $$
since $ \| \Theta \|_{p}^{p}= (p+1)! $. Moreover 
$$ I_{1} \leq \int_{0}^{\infty} |t+a|^{p} e^{-t} d t = \| \Theta + a\|_{p}^{p} \leq 2 \| \Theta\|_{p}^{p} , $$
by Minkowski inequality the fact that $p\geq 2$ and \eqref{a-2}. So we have shown that 
\begin{equation}
\label{I1}
 \frac{\| \Theta\|_{p}^{p}}{ 4^{p}} \leq I_{1} \leq \| \Theta\|_{p}^{p} . 
\end{equation}
Moreover, using again \eqref{a-2}, 
$$ I_{2} \leq \int_{0}^{1} |t-a|^{p} e^{t} e^{ - \frac{ e^{2t}}{2}} d t +  \int_{1}^{\infty} |t-a|^{p} e^{t} e^{ - \frac{ e^{2t}}{2}} d t $$
$$ \leq e 5^{p} + \sqrt{2\pi}\int_{e}^{\infty} | \log{s}- a |^{p} \phi(s) d s \leq 5^{p} \left( e+ \| Z\|_{p}^{p}\right) \leq 5^{p} \|\Theta\|_{p}^{p} .$$
So we have shown that 
\begin{equation}
\label{I2}
 0 \leq I_{2} \leq   5^{p}\|\Theta\|_{p}^{p} . 
\end{equation}
Combining \eqref{I1} and \eqref{I2} we get that $ \frac{ \|\Theta\|_{p}}{4}\leq \| W\|_{p} \leq 6 \| \Theta \|_{p}$.
Finally, it is straightforward to check that $ \| \Theta \|_{p} \simeq \|L\|_{p}$ for all $p>0$. \end{proof}

\subsection{A Tool for Linear Combinations of Logs of Absolute Values of 
Gaussians}
\noindent We are going to use the following fundamental result of 
Latala: 
\begin{thm} 
\label{Latala}
\cite[Thm.\ 2 \& Rem.\ 2]{L}  
Let $ X_{1}, \ldots, X_{n}$ be centered independent random variables and 
$p\geq 2$. Then 
\begin{equation}
\left\| \sum_{i=1}^{n} X_{i} \right\|_{p} \simeq \|| (X_{1}, \ldots , X_{n})  \||_{p} ,
\end{equation}
where $ \|| (X_{1}, \ldots , X_{n})\||_{p}$ is defined to be\\  
\mbox{}\hfill 
$\inf\left\{t>0:  \sum_{i=1}^{n} \log \left(\mathbb E\frac{ \left| 
\frac{ X_{i} } {t}+1\right|^p+ \left|\frac{-X_{i}}{t} + 1\right|^p}
{2} \right)\leq p  \right\}$.
\hfill\qed 
\end{thm}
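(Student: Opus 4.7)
The plan is to prove Latala's theorem by introducing the function
\begin{equation*}
\phi_i(t) := \log \E\left(\frac{|X_i/t+1|^p + |-X_i/t+1|^p}{2}\right),
\end{equation*}
and showing that $\|S_n\|_p$, with $S_n := X_1+\cdots+X_n$, is comparable (up to universal constants) to the critical scale $t_0 := \inf\{t > 0 : \sum_{i=1}^n \phi_i(t) \leq p\}$. First I would reduce to independent symmetric $X_i$ via symmetrization: replacing each $X_i$ by $X_i - X_i'$ for an independent copy $X_i'$ gives a symmetric variable, centering implies $\|S_n\|_p \simeq \bigl\|\sum_i(X_i - X_i')\bigr\|_p$ by Jensen and the triangle inequality, and $\||(X_1,\ldots,X_n)\||_p$ changes only by an absolute constant. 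After symmetrization $\phi_i(t)$ simplifies to $\log \E|1 + X_i/t|^p$, which is strictly decreasing from $+\infty$ (as $t\downarrow 0$) to $0$ (as $t\uparrow \infty$), so $t_0$ is the unique value with $\sum_i \phi_i(t_0) = p$. By homogeneity I rescale to $t_0 = 1$ and must show $\|S_n\|_p \simeq 1$ under the constraint $\sum_i \log \E|1+X_i|^p = p$.

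For the upper bound $\|S_n\|_p \lesssim 1$, the heart of the matter is the multiplicative-type estimate
\begin{equation*}
\E|1 + S_n|^p \;\lesssim\; \prod_{i=1}^n \E|1 + X_i|^p \;=\; e^p,
\end{equation*}
after which the triangle inequality $\|S_n\|_p \leq 1 + \|1 + S_n\|_p$ finishes the job. I would prove the multiplicative estimate by decomposing each $X_i$ at the natural scale $1$: write $X_i = X_i \mathbf{1}_{|X_i|\leq 1} + X_i\mathbf{1}_{|X_i|>1}$, control the bounded parts by a second-moment/martingale argument (their collective variance is dominated by a constant multiple of $\sum_i \phi_i(1) = p$), and handle the unbounded parts by a union bound on the number of indices for which they contribute, followed by H\"older. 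This decomposition is essentially what underlies Latala's Lemma~1.

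For the lower bound $\|S_n\|_p \gtrsim 1$ I would argue the contrapositive: assume $\|S_n\|_p \leq c$ for a small absolute constant $c$ and derive $\sum_i \phi_i(1) \leq p$. Split indices into ``large'' ($\E|X_i|^p \geq 1/2$) and ``small''. For large indices the triangle inequality $\|X_i\|_p \leq \|S_n\|_p + \|S_n - X_i\|_p$ combined with Rosenthal's inequality controls both their count and the individual contribution of each $\phi_i(1)$. For small indices the bound $\log y \leq y - 1$, the Taylor expansion of $|1+x|^p$ about $0$, and the centering of $X_i$ give
\begin{equation*}
\phi_i(1) \;\leq\; \E\bigl(|1+X_i|^p - 1\bigr) \;\lesssim\; p^2\,\E X_i^2 + \E|X_i|^p,
\end{equation*}
so summing and applying Rosenthal's inequality yields
\begin{equation*}
\sum_{\text{small }i} \phi_i(1) \;\lesssim\; p\,\|S_n\|_p^2 + \|S_n\|_p^p,
\end{equation*}
which is $\leq p/2$ for $c$ sufficiently small.

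The main obstacle is the multiplicative upper bound $\E|1+S_n|^p \lesssim \prod_i \E|1+X_i|^p$: it fails without the symmetry reduction of the first step, it fails for $p < 2$ where $|1+x|^p$ lacks the required convexity, and the truncation parameter underlying its proof must be tracked with enough care that the constants remain universal in $n$ and $p$. The remaining ingredients (symmetrization, Rosenthal, Taylor) are standard, but because $t_0$ is defined implicitly, every estimate must be expressed in a form compatible with closing the two-sided equivalence $\|S_n\|_p \simeq t_0$.
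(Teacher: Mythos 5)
This result is not proved in the paper at all --- it is quoted verbatim from Latala \cite[Thm.\ 2 \& Rem.\ 2]{L} and used as a black box in Proposition \ref{pr-L} --- so there is no internal proof to compare against; what matters is whether your sketch would actually establish the theorem, and as written it would not. Your overall architecture (symmetrize, normalize the critical scale $t_0$ to $1$, prove a product-type upper bound and a contrapositive lower bound) is indeed Latala's, but the upper bound hides the essential difficulty in the symbol ``$\lesssim$'': since the right-hand side is a product of $n$ factors, any per-factor loss compounds to $C^n$ and is fatal. What you actually need is the \emph{exact} submultiplicativity $\E|t+X+Y|^p \leq t^{-p}\,\E|t+X|^p\,\E|t+Y|^p$ for independent $X$ and symmetric $Y$, which Latala obtains from a pointwise monotonicity/convexity property of $s \mapsto s^{-p}\E|s+Y|^p$, not from a truncation-plus-martingale-plus-union-bound decomposition. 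Your proposed route gives no mechanism for keeping the constant equal to $1$ per coordinate, and the attribution of a truncation argument to Latala's Lemma 1 is inaccurate.

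The lower bound has a quantitative gap that destroys the universality of the constants, which is the entire content of the theorem (and is what the paper needs in order to deduce Proposition \ref{pr-L} and the tail bound \eqref{W-simple} with absolute constants). For the ``small'' indices your chain is $\sum_i \phi_i(1) \lesssim p^2 \sum_i \E X_i^2 + C^p\sum_i\E|X_i|^p = p^2\,\E S_n^2 + \cdots$, and the only available comparison is $\E S_n^2 = \|S_n\|_2^2 \leq \|S_n\|_p^2$; the claimed intermediate bound $p\|S_n\|_p^2$ (with a single power of $p$) is not justified and is false in general. With the honest bound $p^2\|S_n\|_p^2 \leq p^2 c^2$, forcing this below $p/2$ requires $c \lesssim p^{-1/2}$, so the argument only yields $\|S_n\|_p \gtrsim t_0/\sqrt{p}$. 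Moreover the loss is intrinsic to this chain, not just to the bookkeeping: for $X_i = \eps_i/n$ (Rademacher) with $p = n^2$ and $t_0 \simeq 1$, one has $\sum_i \phi_i(1) = p$ while $p^2\sum_i \E X_i^2 = p^{3/2}$, so the quadratic Taylor term genuinely overestimates $\phi_i(1)$ by a factor of $\sqrt{p}$ in the intermediate regime $p^{-1} \lesssim |X_i| \lesssim 1$, where $|1 \pm X_i|^p$ is governed by neither its quadratic expansion nor by $|X_i|^p$. Handling that regime with universal constants is precisely the hard part of Latala's proof (and of the Gluskin--Kwapien bounds it refines), and it is the part your sketch omits; Rosenthal's inequality cannot substitute for it, since its upper-bound direction itself carries $p$-dependent constants.
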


We will also need the following fact: 
\begin{lemma}
\label{p-comp}
\noindent Let $ X_{1}, \ldots , X_{n}$ be independent random variables and let $ \tilde{X}_{1}, \ldots , \tilde{X}_{n}$ be another sequence of independent random variables.  Fix $p\geq 2$ be an even integer and assume that there are 
$a,b>0$ such that 
\begin{equation}
\label{p-comp-assum}
a\| X_{i}\|_{q} \leq \| \tilde{X}_{i} \|_{q} \leq b\|X_{i} \|_{q}
\end{equation}
for all $1\leq q\leq p$ and for all $1\leq i \leq n$.  Then we have that 
\begin{equation}
\label{p-comp-res}
 a\|| (X_{1}, \ldots , X_{n})\||_{p} \leq  \|| (\tilde{X}_{1}, \ldots , \tilde{X}_{n})\||_{p} \leq b  \|| (X_{1}, \ldots , X_{n})\||_{p}.
\end{equation}
\end{lemma}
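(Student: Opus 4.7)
The plan is to exploit the fact that $p$ is an even integer to replace Latala's functional with a transparent polynomial identity in the even moments $\|X_i\|_{2j}^{2j}$. Since $|y|^p=y^p$ for any real $y$ when $p$ is even, binomial expansion and cancellation of odd powers give
\[
\phi_i(t) \;:=\; \E\frac{|X_i/t+1|^p+|{-X_i/t}+1|^p}{2} \;=\; \sum_{j=0}^{p/2}\binom{p}{2j}\frac{\|X_i\|_{2j}^{2j}}{t^{2j}},
\]
and analogously $\tilde\phi_i(t)$ with $X_i$ replaced by $\tilde X_i$. Each $\phi_i$ is continuous and strictly decreasing on $(0,\infty)$, with $\phi_i(\infty)=1$ and $\phi_i(0^+)=+\infty$ (unless $X_i$ vanishes almost surely), so writing $\tau:=\||(X_1,\ldots,X_n)\||_p$, we have $\sum_i\log\phi_i(t)\leq p$ for every $t\geq\tau$ and $\sum_i\log\phi_i(t)>p$ for every $t<\tau$; the same dichotomy applies to $\tilde\phi_i$.

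The heart of the argument is then a clean scaling sandwich. Raising the hypothesis $a\|X_i\|_q\leq\|\tilde X_i\|_q\leq b\|X_i\|_q$ to the $q$th power at $q=2j$ and dividing by $t^{2j}$ shows that for each $1\leq j\leq p/2$ the $j$th summand of $\tilde\phi_i(t)$ sits between the $j$th summand of $\phi_i(t/a)$ and that of $\phi_i(t/b)$. Crucially, the $j=0$ term equals $1$ in all three series and is unaffected by the scaling. Summing over $j$ yields the pointwise bound
\[
\phi_i(t/a) \;\leq\; \tilde\phi_i(t) \;\leq\; \phi_i(t/b) \qquad \text{for all } t>0 \text{ and all } i.
\]

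From here the two halves of \eqref{p-comp-res} drop out by a single definition chase. For the upper bound, plugging $t=b\tau$ gives $\sum_i\log\tilde\phi_i(t)\leq\sum_i\log\phi_i(\tau)\leq p$, so $\||(\tilde X_1,\ldots,\tilde X_n)\||_p\leq b\tau$. For the lower bound, any $t<a\tau$ satisfies $t/a<\tau$, so $\sum_i\log\tilde\phi_i(t)\geq\sum_i\log\phi_i(t/a)>p$, hence $\||(\tilde X_1,\ldots,\tilde X_n)\||_p\geq a\tau$.

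I expect the one subtle point to be the innocent-looking cancellation of the $j=0$ term, without which one would only get a comparison of $\tilde\phi_i(t)$ with a scaled $\phi_i$ plus an additive constant — enough for a qualitative bound but not for the sharp multiplicative constants $a$ and $b$ in \eqref{p-comp-res}. It is also worth flagging that hypothesis \eqref{p-comp-assum} is invoked only at the even integers $q\in\{2,4,\ldots,p\}$, which is exactly why the lemma insists that $p$ be an even integer rather than merely $p\geq 2$.
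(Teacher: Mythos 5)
Your argument is correct and is essentially the paper's own proof: the binomial expansion of $|x\pm 1|^p$ for even $p$, cancellation of the odd powers, and termwise comparison of the even moments is exactly the Claim \eqref{claim} in the paper (your sandwich $\phi_i(t/a)\leq\tilde\phi_i(t)\leq\phi_i(t/b)$ is the paper's $\E\eta(aX_i/t)\leq\E\eta(\tilde X_i/t)\leq\E\eta(bX_i/t)$), followed by the same definition chase. Your only addition is making the monotonicity and continuity of $\phi_i$ explicit in that final step, which the paper leaves implicit.
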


\begin{proof}
\noindent We will first prove the following 

\noindent {\it Claim}: Under the assumptions of the Lemma we have that for every $t>0$
\begin{equation}
\label{claim}
 \mathbb E \eta\!\left(aX_{i}/t\right) \leq  \mathbb E \eta\!\left(\tilde{X}_{i}/t\right) \leq   \mathbb E \eta\left( bX_{i}/t\right) , \ 1\leq i \leq n ,
\end{equation}
where $ \eta(x):= \frac{1}{2} \left( | x+1|^{p} + |1-x|^{p}\right)$. 

Indeed, $ \mathbb E \eta\!\left( \tilde{X}_{i}/t\right)  =\frac{1}{2} \sum_{k=0}^{p} {p\choose k} \mathbb E \left( ( \tilde{X}_{i}/t)^{k}) +  ( -\tilde{X}_{i}/t)^{k})\right)$ 
$$ = \frac{1}{2} \sum_{k=0, k \ {\rm even} }^{p} {p\choose k} \mathbb E \left( ( \tilde{X}_{i}/t)^{k}) +  ( \tilde{X}_{i}/t)^{k})\right)$$ 
$$ \leq \frac{1}{2} \sum_{k=0, k \ {\rm even}}^{p} {p\choose k} \mathbb E \left( ( bX_{i}/t)^{k}) +  ( bX_{i}/t)^{k})\right) $$
$$ =\frac{1}{2} \sum_{k=0}^{p} {p\choose k} \mathbb E \left( (  bX_{i}/t)^{k}) +  ( - bX_{i}/t)^{k})\right) =  \mathbb E \eta\!\left(bX_{i}/t\right). $$
The proof of the other side inequality in \eqref{claim} is identical. 
Equation \eqref{p-comp-res} then follows immediately from the claim and the 
definition of $\|| (X_{1}, \ldots , X_{n})\||_{p} $. 
\end{proof}

Our preceding lemma leads to the following:  
\begin{cor}
\label{cor}
\noindent Let $ {\bf X}:= (X_{1}, \ldots , X_{n})$, ${\bf \tilde{X}}:= ( \tilde{X}_{1}, \ldots , \tilde{X}_{2}) $  be two centered random vectors with 
independent coordinates and let 
$\theta= (\theta_{1}, \ldots , \theta_{n}) \in \mathbb R^n$. We assume that 
\eqref{p-comp-assum} holds true.  Then for every $1\leq r \leq p$, 
\begin{equation}
\label{cor-1}
c_{1} a \| \langle {\bf X}, \theta \rangle \|_{r} \leq \| \langle {\bf \tilde{X}}, \theta \rangle \|_{r} \leq c_{2} b \| \langle {\bf X}, \theta \rangle \|_{r},
\end{equation}
where $c_{1}, c_{2}>0$ are universal constants. 
\end{cor}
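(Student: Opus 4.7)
The plan is to reduce to Lemma \ref{p-comp} followed by Theorem \ref{Latala}, after absorbing the weights $\theta_i$ into the variables. Set $Y_i := \theta_i X_i$ and $\tilde{Y}_i := \theta_i \tilde{X}_i$. These are centered and independent, and homogeneity of the $L^q$-norm gives $\|Y_i\|_q = |\theta_i|\|X_i\|_q$, so hypothesis \eqref{p-comp-assum} propagates to $a\|Y_i\|_q \leq \|\tilde{Y}_i\|_q \leq b\|Y_i\|_q$ for all $1 \leq q \leq p$ and all $i$. Since $\langle \mathbf{X}, \theta\rangle = \sum_i Y_i$ (and likewise for $\tilde{\mathbf{X}}$), the target inequality becomes a comparison between $L^r$-norms of two centered independent sums whose summands are moment-comparable.

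The main step, for $r$ an even integer in $[2,p]$, is a three-line chain. First apply Lemma \ref{p-comp} to the tuples $(Y_1,\ldots,Y_n)$ and $(\tilde{Y}_1,\ldots,\tilde{Y}_n)$ (with its exponent $p$ replaced by $r$, which is valid since all required moment comparisons hold for $q \leq r \leq p$) to obtain
$$a\, \||(Y_1,\ldots,Y_n)\||_r \leq \||(\tilde{Y}_1,\ldots,\tilde{Y}_n)\||_r \leq b\, \||(Y_1,\ldots,Y_n)\||_r.$$
Then Theorem \ref{Latala} gives $\|\sum_i Y_i\|_r \simeq \||(Y_1,\ldots,Y_n)\||_r$ and the analogous relation for the $\tilde{Y}_i$. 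Concatenating absorbs the two universal Latala constants into the claimed $c_1, c_2$, producing \eqref{cor-1} for even integer $r$. The base case $r=2$ also follows directly (and without any universal loss) from pairwise orthogonality of centered independent summands: $\|\sum_i Y_i\|_2^2 = \sum_i \theta_i^2\|X_i\|_2^2$, and likewise for the $\tilde{Y}_i$, so the hypothesis at $q=2$ transfers immediately.

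To reach arbitrary $r \in [1,p]$, I would bracket $r$ between its two nearest even integers in $[2,p]$ (or between $1$ and $2$ when $r < 2$) and interpolate, using log-convexity of $r \mapsto \log\|\cdot\|_r$ together with monotonicity of $L^r$-norms on a probability space. The main obstacle is precisely this bridging step: the proof of Lemma \ref{p-comp} relies crucially on the polynomial identity $(x+1)^r + (1-x)^r = 2\sum_{k\text{ even}}\binom{r}{k}x^k$, which is available only when $r$ is a non-negative even integer, so the $\||\cdot\||_r$-comparison is not immediate at non-integer $r$ and must be deduced indirectly. Making sure the resulting constants $c_1, c_2$ remain universal (independent of the particular $r$ chosen, and of $n$) is where the delicate bookkeeping lies; once that is in place, the chain Latala $\to$ Lemma \ref{p-comp} $\to$ Latala produces \eqref{cor-1} in the stated form.
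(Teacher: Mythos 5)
Your argument is exactly the paper's: the published proof consists of the single line ``the result follows from Theorem \ref{Latala} and Lemma \ref{p-comp} applied to the random variables $\theta_i X_i$ and $\theta_i \tilde{X}_i$.'' The bridging issue you flag for non-even $r$ (Lemma \ref{p-comp} is proved only for even integer exponents, while the corollary claims all $1\leq r\leq p$) is a genuine subtlety, but the paper does not address it either, so your proposal is if anything more careful than the original.
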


\begin{proof}
The result follows from Theorem \ref{Latala} and Lemma \ref{p-comp} applied to the random variables $ \theta_{i} X_{i}$ and $ \theta_{i} \tilde{X_{i}}$. 
\end{proof}

\section{Additional Probabilistic Estimates}
\label{sec:prob} 
Let $ {\bf W}:= ( W_{1}, \ldots , W_{n})$ be the centered random vector with 
independent entries that are logs of absolute values of real standard 
Gaussians. Let $ {\bf L} := (L_{1}, \ldots , L_{n})$. 
Let $\theta \in S^{n-1}$ (Here $S^{n-1}$ is the unit sphere in dimension $n$.)  
The next theorem below is a special case of a more general result of Gluskin 
and Kwapien \cite{GK}. Let us introduce some notation. Let $ x\in \Rn$. We 
write $ x^{\ast}$ for the non-increasing rearrangement of the vector 
$ ( |x_{1}|, \ldots, | x_{n}| )$. Given any $1\leq s\leq n$ and a vector $x$ we 
denote $ x^{s} $ the vector with entries $ x_{i}^{\ast}$ for $i \leq s $ and 
$0$ otherwise and by $x_{s}$ the vector with entries $ 0$ for $i\leq s$ and 
entries $x^{\ast}$ for $i>s$. 

\begin{thm} (Special case of \cite{GK}) 
\label{th-L}
\noindent  There are constants $C_{1}, C_{2} >0$ such that for every $n\geq 1$, 
$p\geq 1$, and every $\theta \in S^{n-1}$, one has that 
\begin{equation}
\label{GK-1}
C_{1}p \| \theta^{p} \|_{\infty} + C_{1}\sqrt{p}  \|\theta_{p} \|_{2}  \leq  \| \langle L, \theta \rangle \|_{p } \leq C_{2}p \| \theta^{p} \|_{\infty} + C_{2}\sqrt{p}  \|\theta_{p} \|_{2}.  \text{ \qed } 
\end{equation}
\end{thm}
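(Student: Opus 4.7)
The plan is to derive Theorem \ref{th-L} from Latala's moment equivalence (Theorem \ref{Latala}) applied to the centered, independent summands $X_i := \theta_i L_i$, thereby reducing the problem to estimating the Latala norm $\||(\theta_1 L_1, \ldots, \theta_n L_n)\||_p$. I would reorder coordinates (which does not change either side of \eqref{GK-1}) so that $|\theta_1| \geq \cdots \geq |\theta_n|$, so that $\theta^p$ collects the first $p$ entries and $\theta_p$ the rest. The key computational input is the moment identity $\E |L_i|^q = q!$, which yields $\|L_i\|_q \simeq q$ for all $q \geq 1$.

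For the upper bound, expanding
$$\eta(x) := \frac{|1+x|^p + |1-x|^p}{2} = \sum_{0 \leq 2k \leq p} \binom{p}{2k} x^{2k}$$
and taking expectations gives $\E \eta(\theta_i L_i/t) = \sum_{2k \leq p} \binom{p}{2k}(2k)!(\theta_i/t)^{2k}$. For $i > p$ (the ``tail''), the $k=1$ term dominates (since $|\theta_i|/t$ is small), giving $\log \E \eta(\theta_i L_i / t) \lesssim p^2 \theta_i^2 / t^2$ and a total tail contribution at most $p^2 \|\theta_p\|_2^2/t^2$ to Latala's infimand. For $i \leq p$ (the ``top''), the degree-$p$ term dominates, giving $\log \E \eta(\theta_i L_i/t) \lesssim p \log(1 + p|\theta_i|/t)$ and a total top contribution at most $p^2 \log(1 + p\|\theta^p\|_\infty/t)$. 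Setting $t \simeq p\|\theta^p\|_\infty + \sqrt{p}\|\theta_p\|_2$ drives the total below $p$, which establishes the upper half of \eqref{GK-1}.

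For the matching lower bound I would extract each regime separately from the Latala condition $\sum_i \log \E \eta(\theta_i L_i / t) \leq p$: applied to the tail indices alone it forces $p\|\theta_p\|_2^2/t^2 \lesssim 1$, while applied to the single dominant summand together with the elementary bound $\E \eta(x) \gtrsim (p|x|/e)^p$ for $p|x|$ large it forces $p|\theta_1|/t \lesssim 1$, i.e.\ $t \gtrsim p\|\theta^p\|_\infty$. Combining these two necessary conditions matches the upper bound up to absolute constants, and Theorem \ref{Latala} then transfers the equivalence to $\|\langle L, \theta\rangle\|_p$.

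The main obstacle is the sharpness of the Gluskin--Kwapien split at exactly index $p$: a priori some other threshold $k$ could yield a smaller combined bound $k\|\theta^k\|_\infty + \sqrt{k}\|\theta_k\|_2$, so one must verify that the minimum over $k$ is attained at $k = p$. This follows from balancing the logarithmic ``top'' term against the quadratic ``tail'' term, using the ordering of the $|\theta_i|$ and the fact that $\binom{p}{2k}(2k)! \simeq p^{2k}$ in the relevant range; it is clean in outline but the bookkeeping in the moment expansion is the most delicate part of the argument.
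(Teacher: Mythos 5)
The paper gives no proof of Theorem \ref{th-L} at all: it is stated with a \qed as a special case of the Gluskin--Kwapien moment estimates \cite{GK}, so your self-contained derivation from Latala's Theorem \ref{Latala} is a genuinely different (and more transparent) route, and it is essentially sound. The computation is exactly the one Latala's framework is designed for: with $X_i=\theta_i L_i$ and $p$ an even integer, $\E\,\eta(\theta_iL_i/t)=\sum_{2k\le p}\binom{p}{2k}(2k)!\,(\theta_i/t)^{2k}$ with $\binom{p}{2k}(2k)!=p(p-1)\cdots(p-2k+1)\le p^{2k}$, and taking $t=C\bigl(p\|\theta\|_\infty+\sqrt{p}\,\|\theta_p\|_2\bigr)$ makes every summand's logarithm at most $2/C^2$ on the top block and at most $2p^2\theta_i^2/t^2$ on the tail, so the Latala infimand drops below $p$; conversely the single largest summand forces $t\gtrsim p\|\theta\|_\infty$ via $\E\,\eta\ge p!(\theta_1^*/t)^p$ and the tail block forces $t\gtrsim\sqrt{p}\,\|\theta_p\|_2$. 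Two remarks. First, the ``main obstacle'' you flag is a phantom: the theorem asserts a two-sided bound for the split at index $p$ specifically, so no minimization over alternative thresholds $k$ is required --- both inequalities are verified directly for $k=p$ as above. Second, two small gaps remain to be papered over: your binomial expansion of $\eta$ is exact only for even integer $p$ (general $p\ge 2$ follows by passing to $p'=2\lceil p/2\rceil$ and noting that $p\|\theta\|_\infty+\sqrt{p}\|\theta_p\|_2$ changes only by constants when $p$ is doubled), and Theorem \ref{Latala} as quoted requires $p\ge 2$ while the statement claims $p\ge 1$, so the range $1\le p<2$ needs the separate (easy) observation that all these moments are comparable to $\|\langle L,\theta\rangle\|_2=\sqrt{2}$ there. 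What the citation buys the paper is brevity; what your argument buys is that the paper would become self-contained, since Theorem \ref{Latala} is already stated and \cite{GK} would no longer be needed.
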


\noindent Lemma \ref{moments-W}, Theorem \ref{th-L}, and Corollary 
\ref{cor} together imply the following  

\begin{prop}
\label{pr-L}
There exists two constants $C_{1}, C_{2} >0$ such that for every $n\geq 1$, $p\geq 1$  and every $\theta \in S^{n-1}$, one has that 
\begin{equation}
\label{GK-2}
C_{1}p \| \theta^{p} \|_{\infty} + C_{1}\sqrt{p}  \|\theta_{p} \|_{2}  \leq  \| \langle W, \theta \rangle \|_{p } \leq C_{2}p \| \theta^{p} \|_{\infty} + C_{2}\sqrt{p}  \|\theta_{p} \|_{2} . 
\end{equation}
\end{prop}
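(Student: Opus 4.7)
The plan is to deduce Proposition \ref{pr-L} by transferring the Gluskin--Kwapien bound of Theorem \ref{th-L} from $\langle L,\theta\rangle$ to $\langle W,\theta\rangle$ via the comparison machinery of Corollary \ref{cor}. Since $W_i$ is centered by construction and $L_i$ is symmetric (hence centered), both $W$ and $L$ have independent centered coordinates, so Corollary \ref{cor} is applicable provided that its moment-comparison hypothesis \eqref{p-comp-assum} holds for the relevant range of $q$.

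The first step is therefore to verify $\|W_i\|_q \simeq \|L_i\|_q$ with universal constants, uniformly in $i$, for every $q\geq 1$. For $q\geq 2$ this is exactly Lemma \ref{moments-W} (through the chain $\|W\|_q \simeq \|\Theta\|_q \simeq \|L\|_q$). For $1\leq q\leq 2$ the range is not covered by Lemma \ref{moments-W}, but it is easy: the distributions of $W_i$ and $L_i$ are fixed and do not depend on $i$ or on $n$, each is non-degenerate and centered with finite second moment, and $\|L_i\|_1=1$, $\|L_i\|_2=\sqrt 2$. By Lyapunov's inequality, both $\|W_i\|_q$ and $\|L_i\|_q$ are sandwiched between positive universal constants on $[1,2]$, so the ratio is uniformly controlled. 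This establishes \eqref{p-comp-assum} with universal $a,b$ for all $q\geq 1$, independently of $p$.

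The second step is to invoke Corollary \ref{cor} with $(X_1,\ldots,X_n)=(L_1,\ldots,L_n)$ and $(\tilde X_1,\ldots,\tilde X_n)=(W_1,\ldots,W_n)$. Given any $r\geq 1$, pick an even integer $p\geq\max(r,2)$; since \eqref{p-comp-assum} holds for all $q\in[1,p]$ by the previous step, the corollary yields
\[
c_1 \,\|\langle L,\theta\rangle\|_r \;\leq\; \|\langle W,\theta\rangle\|_r \;\leq\; c_2 \,\|\langle L,\theta\rangle\|_r
\]
for universal $c_1,c_2>0$. Plugging in the two-sided bound for $\|\langle L,\theta\rangle\|_r$ from Theorem \ref{th-L} and absorbing $c_1,c_2$ into the constants $C_1,C_2$ gives exactly \eqref{GK-2}.

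The only real subtlety is making sure the moment comparison is dimension-free and in particular is valid for every $q$ that Corollary \ref{cor} needs to consult, which is why one has to patch the sub-$L^2$ range separately rather than appealing to Lemma \ref{moments-W} alone. Once that uniformity is in hand, the proposition follows by a clean concatenation of the three ingredients the authors have already developed: Lemma \ref{moments-W} (coordinatewise moment equivalence), Corollary \ref{cor} (transfer of linear combinations), and Theorem \ref{th-L} (the Gluskin--Kwapien bound for symmetric exponentials).
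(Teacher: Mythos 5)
Your proof follows exactly the paper's intended route: the paper states that Lemma \ref{moments-W}, Theorem \ref{th-L}, and Corollary \ref{cor} together imply Proposition \ref{pr-L}, and you spell out precisely how those three ingredients chain together. Your extra care in patching the $1\le q\le 2$ range (where Lemma \ref{moments-W} is silent) is a reasonable belt-and-braces check, though strictly speaking the proof of Lemma \ref{p-comp} only consults the even moments $q\in\{0,2,4,\dots,p\}$, so the hypothesis \eqref{p-comp-assum} is only genuinely needed on the range already covered by Lemma \ref{moments-W}.
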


\noindent The above result gives very precise estimates about the 
concentration of the function 
$$ \P\!\left(\left| \sum_{i=1}^{n} \theta_{i} \log|Z_{i}| 
- \sum_{i=1}^{n} \theta_{i} a \right| \geq t \right) $$ 
for all $t$. A less precise but simpler to use statement than Theorem 
\ref{th-L} is the following
estimate: For every $ \theta \in S^{n-1}$ 
\begin{equation}
\label{L-simple}
\P\!\left(\left| \sum_{i=1}^{n} \theta_{i} L_{i} \right| \geq t \right) 
\leq \exp\left\{-C \min\left\{ \frac{ t }{\|\theta\|_{\infty}}, t^{2}\right\}
\right\}, \ t>0
\end{equation}

\noindent Using the above we arrive at the following

\begin{prop}
\label{prop:lin} 
Let $Z_{1}, \ldots , Z_{n}$ be independent standard real Gaussian random 
variables, $\theta \in S^{n-1}$, and $ a$ as defined in \eqref{a}. Then the 
following holds:
\begin{equation}
\label{W-simple}
 \P\!\left( \left| \sum_{i=1}^{n} \theta_{i} \log|Z_{i}| - \sum_{i=1}^{n} 
\theta_{i} a \right| \geq t \right)  \leq C^{\prime}\exp\left\{-C \min\left\{ 
\frac{ t }{\|\theta\|_{\infty}}, t^{2}\right\}\right\}, 
\end{equation}
for $t\!>\!0$, where $C, C^{\prime}>0$ are absolute constants. 
\end{prop}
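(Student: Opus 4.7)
The plan is to express the quantity inside the probability as a linear functional of the centered vector $W=(W_1,\ldots,W_n)$ already studied, and then derive a subexponential tail bound from the moment estimates of Proposition~\ref{pr-L} via Markov's inequality --- exactly the template by which \eqref{L-simple} is obtained for $L$.

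First I would rewrite the difference in the probability, using $W_i=\log|Z_i|-a$, as
\[
\sum_{i=1}^n \theta_i \log|Z_i| - \sum_{i=1}^n \theta_i a \;=\; \sum_{i=1}^n \theta_i W_i \;=\; \langle W,\theta\rangle,
\]
so it suffices to bound $\P(|\langle W,\theta\rangle|\geq t)$. Next I would invoke Proposition~\ref{pr-L} together with the crude estimates $\|\theta^p\|_\infty\leq\|\theta\|_\infty$ and $\|\theta_p\|_2\leq\|\theta\|_2=1$ (valid since $\theta\in S^{n-1}$), yielding
\[
\|\langle W,\theta\rangle\|_p \;\leq\; C_2\bigl(p\|\theta\|_\infty+\sqrt{p}\bigr) \quad \text{for all } p\geq 2.
\]

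Then I would apply Markov's inequality to $|\langle W,\theta\rangle|^p$ for even integer $p$ to get
\[
\P(|\langle W,\theta\rangle|\geq t) \;\leq\; \left(\frac{C_2(p\|\theta\|_\infty+\sqrt p)}{t}\right)^p,
\]
and optimize $p$ in two regimes. When $t\geq 1/\|\theta\|_\infty$ the linear-in-$p$ term dominates, so choosing $p$ of order $t/\|\theta\|_\infty$ (with a small enough hidden constant) makes the right-hand side at most $e^{-ct/\|\theta\|_\infty}$; when $t\leq 1/\|\theta\|_\infty$ the $\sqrt p$ term dominates, and taking $p$ of order $t^2$ yields $e^{-ct^2}$. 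Merging the two regimes produces the claimed $\min\{t/\|\theta\|_\infty,\,t^2\}$ in the exponent.

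The only real obstacle is this final optimization step: one must force $p$ to be an even integer at least $2$, and separately handle the range of small $t$ where the optimal real $p$ would drop below $2$ --- in that range the trivial estimate $\P(\cdot)\leq 1$ suffices and is absorbed into the constant $C'$. All of the substantive probabilistic content --- the moment comparison between $W$ and the symmetric exponentials $L$ through Lemma~\ref{moments-W} and Corollary~\ref{cor}, and the Gluskin--Kwapien estimate of Theorem~\ref{th-L} --- has already been packaged into Proposition~\ref{pr-L}, so no new probabilistic input is needed beyond the one-line Markov--plus--optimization argument above.
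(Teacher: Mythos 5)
Your proposal matches the paper's own proof in all essentials: both reduce the probability to $\P(|\langle W,\theta\rangle|\geq t)$, invoke Proposition~\ref{pr-L} for the $p$-norm bound, apply Markov's inequality, and split into the two regimes $t\lessgtr\|\theta\|_\infty^{-1}$ (equivalently $p\lessgtr\|\theta\|_\infty^{-2}$), absorbing small $t$ into $C'$. The only cosmetic difference is that you carry the single unified bound $\|\langle W,\theta\rangle\|_p\leq C_2(p\|\theta\|_\infty+\sqrt p)$ and then optimize $p$, whereas the paper first simplifies to $C_2\sqrt p$ or $C_2 p\|\theta\|_\infty$ depending on the regime and then sets $t=e\|\langle W,\theta\rangle\|_p$; these are interchangeable, and indeed $\|\theta^p\|_\infty=\|\theta\|_\infty$ exactly (not merely $\leq$) so your ``crude'' step is lossless.
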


\begin{proof}
\noindent By \eqref{GK-2} we have that $ \|\langle W, \theta\rangle \|_{p} \leq C_{2}\sqrt{p} $ if $ p \leq \|\theta \|_{\infty}^{-2}$ and $ \|\langle W, \theta\rangle \|_{p} \leq C_{2}p \|\theta \|_{\infty}  $ otherwise. 
Using Markov's Inequality we get that 
$\P\!\left( |\langle W, \theta \rangle | \geq eC_{2} \sqrt{p}\right) 
\leq e^{-p}, \ {\rm if } \  p \leq \|\theta \|_{\infty}^{-2}$, 
or (if we will set $ e C_{2} \sqrt{p}= t$), for $ t\geq C_{3}$, 
\begin{equation}
\label{prop-1-1}
 \P\!\left( | \langle W, \theta \rangle | \geq t \right) \leq e^{ - C_{4} t } , \ {\rm if } \ t \leq \|\theta \|_{\infty}^{-1} 
\end{equation}
and $\P\!\left( | \langle W, \theta \rangle | \geq eC_{2} p \|\theta\|_{\infty}\right) \leq e^{-p}, \ {\rm if } \  p \geq \|\theta \|_{\infty}^{-2}$ 
or (if we will set $ e C_{2} p \|\theta \|_{\infty} = t$), for 
$ t\geq \frac{C_{4}}{ \|\theta\|_{\infty}} $,
\begin{equation}
\label{prop-1-2}
 \P\!\left( | \langle W, \theta \rangle | \geq t \right) \leq e^{ - C_{5} t } .
 \end{equation}
 Combining \eqref{prop-1-1} and \eqref{prop-1-2} and adjusting the constants properly  we get \eqref{L-simple}. 
\end{proof}

\subsection{On the Expectation of $\log\log$} 
\label{sub:loglog} 
Let $ Z, Z_{i}$ be independent real standard Gaussian random variables and 
let $d$ be a positive integer.

Let $ X:= \max\{ |Z| , |Z|^{-1} \}$. We have that 
\begin{equation}
\label{W-up-1}
\P\!\left( \log\log\{ e X \} \geq t \right) \leq  \sqrt{\frac{8}{\pi}} e^{ - (e^{t}-1)} , \ t\geq 0 .
\end{equation}
Indeed, for $ t\geq 0$, we have \\ 
$\P\!\left( \log\log\{ e X \} \geq t \right) = 
\P\!\left( X\geq e^{ e^{t}-1}\right) =  \P\!\left( |Z| \geq e^{ e^{t}-1}  
\ {\rm or} \ |Z|\leq e^{-(e^{t}-1)}  \right)$ 
$$ \leq \P\!\left( |Z| \geq e^{e^{t}-1} \right)  
 + \P\!\left(\ |Z| \leq e^{-(e^{t}-1)}  \right) 
 \leq 2 \P\! \left( \  |Z| \leq e^{-  (e^{t}-1)}  \right)$$ 
$\leq 4 \frac{1}{ \sqrt{2\pi} }   e^{-  (e^{t}-1)}$. 
So, we get that 
\begin{equation}
\label{W-up-2}
\E [ \log\log\{ e X \} ] \leq   \sqrt{\frac{8}{\pi}} .
\end{equation}
Indeed, since $ \log\log\{ e X\} \geq 0$, using \eqref{W-up-1},
$$ \mathbb E [ \log\log\{ e X \} ]  = \int_{0}^{\infty}  \P\! \left( 
\log\log\{ e X \} \geq t \right)  d t $$
$$ \leq \sqrt{\frac{8}{\pi}} \int_{0}^{\infty} 
 e^{ - (e^{t}-1) } d t = \sqrt{\frac{8}{\pi}} \int_{0}^{\infty} \frac{1}{ s+1} 
 e^{ -s } d s \leq   \sqrt{\frac{8}{\pi}} .$$
In what follows we assume that 
\begin{equation}
\label{assum-d}
d\geq e^{2}. 
\end{equation}
We will use the following elementary inequality: 
\begin{equation}
\label{el-ineq}
a+ b \leq 2 a b , \ a, b\geq 1 . 
\end{equation}
Since $ e X\geq e $ and $ d/e\geq e $, using \eqref{el-ineq} and 
\eqref{W-up-2} we get 
$$\mathbb E \log\log\{ d X\} = \mathbb E \log\{ \log(d/e) + \log\{ e X\}\}$$ 
$$\leq  \mathbb E \log \{ 2 (\log(d/e)) ( \log\{ e X\} ) \}$$
$$=\log{2} + \log\log( d/e) + E [ \log\log\{ e X \} ]  \leq \log{2} + \log\log( d/e) +  \sqrt{\frac{8}{\pi}} .$$
Moreover, since $ X\geq 1$, $ \log\log\{ d X \} \geq \log\log{d}$ and 
we conclude that   
\begin{equation}
\label{W-up-3}
 \log\log{d} \leq \mathbb E \log\log\{ d X\}  \leq \log{2} + \log\log( d/e) +  \sqrt{\frac{8}{\pi}}  . 
\end{equation}

\subsection{Log-Concavity}
\noindent A Borel measure $\mu$ in $ \mathbb R^{n}$ is called log-concave if for every compact sets $ A, B$ and $ \lambda \in (0,1)$ one has 
\begin{equation}
\label{log-concave}
\mu ( \lambda A + (1-\lambda) B ) \geq \mu( A)^{\lambda } \mu(B)^{1-\lambda} .
\end{equation}

\noindent 

\begin{thm}[Borell \cite{Borell}] 
\label{Borell}
Let $\mu$ be a Borel measure in $\mathbb R^{n}$ that gives positive mass to some open ball. Then $\mu$ is $log$-concave if and only if has a density 
$\rho_{\mu}$ that is a $log$-concave function i.e. $\rho_{\mu}$ is 
non-negative, supported on a convex set and 
$$ \rho_{\mu} ( \lambda x + (1-\lambda) y ) \geq \rho_{\mu}^{\lambda} (x) 
\rho_{\mu}^{1-\lambda} ( y) , \ x, y \in \mathbb R^{n} \ \lambda \in (0,1) . $$
\end{thm}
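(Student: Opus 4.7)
The plan is to prove the two directions of this classical equivalence separately, deferring the bulk of the difficulty to the direction ``log-concave measure implies log-concave density.''

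For the easy direction, suppose $\rho_\mu$ is a log-concave density and consider compact sets $A, B \subset \R^n$ and $\lambda \in (0,1)$. I would invoke the Pr\'ekopa--Leindler inequality applied to the functions $f := \rho_\mu \mathbf{1}_A$, $g := \rho_\mu \mathbf{1}_B$, and $h := \rho_\mu \mathbf{1}_{\lambda A + (1-\lambda)B}$. For any $x \in A$ and $y \in B$, the point $\lambda x + (1-\lambda)y$ lies in $\lambda A + (1-\lambda)B$, and the hypothesized log-concavity of $\rho_\mu$ gives exactly the pointwise condition $h(\lambda x + (1-\lambda)y) \geq f(x)^\lambda g(y)^{1-\lambda}$ required by Pr\'ekopa--Leindler. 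Integrating then yields $\mu(\lambda A + (1-\lambda)B) \geq \mu(A)^\lambda \mu(B)^{1-\lambda}$, and inner regularity of $\mu$ extends this to arbitrary Borel sets.

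For the converse, I would proceed in three stages. First, reduce to a full-dimensional situation: applying \eqref{log-concave} to singletons shows directly that the support of $\mu$ is a convex subset of $\R^n$, and the non-degeneracy hypothesis (positive mass on some open ball) forces this convex support to have non-empty interior. Second, prove that $\mu$ is absolutely continuous with respect to Lebesgue measure on the interior of its support. Concretely, fix an open ball $B_0$ with $\mu(B_0) > 0$ inside the support; for any Borel set $A$ contained in a compact subset of the interior, iteratively apply \eqref{log-concave} to combinations of $A$ with small scaled copies of $B_0$, converting the resulting Minkowski sums back to Lebesgue volumes. This bootstrapping yields an estimate of the form $\mu(A) \leq C\cdot\mathrm{Leb}(A)^\alpha$ for some $\alpha > 0$, which is more than enough for absolute continuity. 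Third, once a density $\rho_\mu$ exists, derive its log-concavity by applying the measure inequality to small balls centered at points $x$, $y$, and $\lambda x + (1-\lambda)y$, dividing by the Lebesgue volume, and taking the limit as the radius shrinks to zero, using the Lebesgue differentiation theorem to identify the limits with the density values.

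The main obstacle will be the middle stage, establishing absolute continuity, since this is the place where the measure-theoretic inequality has to be converted into a genuine Lebesgue-volume bound. The trick is to exploit the fact that an open ball inside the support is ``spread out'' enough that the Minkowski combinations $\lambda A + (1-\lambda) B_0$ have Lebesgue measure comparable to that of $B_0$ by the Brunn--Minkowski inequality hidden in the log-concavity hypothesis, while $\mu(\lambda A + (1-\lambda) B_0)$ stays bounded by $\mu(\R^n)$; inverting \eqref{log-concave} to solve for $\mu(A)$ then produces the needed control. The final log-concavity of $\rho_\mu$, and in particular its support being convex, then follows from the limiting argument on Lebesgue densities.
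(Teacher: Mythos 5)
The paper does not prove this statement at all; it is quoted as a classical theorem of Borell and used as a black box, so your attempt is being judged on its own merits. Your ``easy'' direction is fine: Pr\'ekopa--Leindler applied to $\rho_\mu\mathbf{1}_A$, $\rho_\mu\mathbf{1}_B$, $\rho_\mu\mathbf{1}_{\lambda A+(1-\lambda)B}$ is exactly the standard argument. (Two small repairs: convexity of the support should come from applying \eqref{log-concave} to small balls around support points --- singletons typically have $\mu$-measure zero, so the singleton version is vacuous --- and note that $\lambda B(x,r)+(1-\lambda)B(y,r)=B(\lambda x+(1-\lambda)y,r)$.)

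The genuine gap is in your middle stage, absolute continuity. The only upper bound you have on $\mu\bigl(\lambda A+(1-\lambda)B_0\bigr)$ is the total mass $\mu(\R^n)$, so ``inverting'' \eqref{log-concave} gives
$$\mu(A)\;\leq\;\Bigl(\mu(\R^n)\,/\,\mu(B_0)^{1-\lambda}\Bigr)^{1/\lambda},$$
a constant in which $\mathrm{Leb}(A)$ never appears; no choice of $\lambda$ and no iteration of this step can manufacture a bound of the form $\mu(A)\leq C\,\mathrm{Leb}(A)^{\alpha}$. Brunn--Minkowski only gives you a \emph{lower} bound on $\mathrm{Leb}(\lambda A+(1-\lambda)B_0)$, which points the wrong way: you would need an upper bound on the $\mu$-mass of that combination in terms of its Lebesgue measure, and that is precisely the absolute continuity you are trying to prove. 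Ruling out a singular part (e.g.\ an atom sitting inside a full-dimensional support) is the real content of Borell's theorem and requires his structure theory for convex measures --- in essence a reduction to dimension one by disintegration and an analysis of the distribution function there --- not a one-line inversion. Your third stage (Lebesgue differentiation on small balls) is the right idea but only yields the log-concavity inequality for almost every pair $(x,y)$; one still has to pass to a suitable (upper semicontinuous) version of the density to get it everywhere, which deserves a sentence.
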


\begin{thm}[Pr\'ekopa]
\label{PL}
Sum of independent $log$-concave random variables is $log$-concave. 
\end{thm}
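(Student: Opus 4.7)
The plan is to reduce to densities and invoke the Pr\'ekopa--Leindler inequality. By Borell's theorem (Theorem \ref{Borell}), after discarding the trivial case of laws concentrated at a point, we may assume the summands $X,Y$ are independent real random variables with log-concave densities $f,g$. The density of $X+Y$ is then the convolution
\[
h(z)=\int_{\R} f(x)\,g(z-x)\,dx,
\]
and the task becomes showing $h$ is log-concave.

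The key analytic tool is the one-dimensional \emph{Pr\'ekopa--Leindler inequality}: if $F,G,H$ are nonnegative measurable functions on $\R$ satisfying $H(\lambda u+(1-\lambda)v)\geq F(u)^\lambda G(v)^{1-\lambda}$ for all $u,v\in\R$ and some fixed $\lambda\in(0,1)$, then $\int H\geq (\int F)^\lambda (\int G)^{1-\lambda}$. This is the functional lift of the Brunn--Minkowski bound $|\lambda A+(1-\lambda)B|\geq \lambda|A|+(1-\lambda)|B|$, and admits a short proof via the level-set containment $\{H\geq t\}\supseteq \lambda\{F\geq t\}+(1-\lambda)\{G\geq t\}$ followed by integration in $t$.

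Given $z_1,z_2\in\R$, set $F(x):=f(x)g(z_1-x)$, $G(x):=f(x)g(z_2-x)$, and $H(x):=f(x)g(\lambda z_1+(1-\lambda)z_2-x)$. Because the $g$-argument appearing in $H(\lambda x_1+(1-\lambda)x_2)$ equals $\lambda(z_1-x_1)+(1-\lambda)(z_2-x_2)$, the log-concavity of $f$ and $g$ applied separately yields the pointwise bound
\[
H(\lambda x_1+(1-\lambda)x_2)\geq f(x_1)^\lambda f(x_2)^{1-\lambda}g(z_1-x_1)^\lambda g(z_2-x_2)^{1-\lambda}=F(x_1)^\lambda G(x_2)^{1-\lambda}.
\]
Applying Pr\'ekopa--Leindler then gives $h(\lambda z_1+(1-\lambda)z_2)\geq h(z_1)^\lambda h(z_2)^{1-\lambda}$, establishing log-concavity of $h$; induction on the number of summands extends this to any finite independent sum. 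The main obstacle is really just Pr\'ekopa--Leindler itself, but in one dimension it is equivalent (up to packaging) to the elementary Brunn--Minkowski bound $|A+B|\geq |A|+|B|$ for compact $A,B\subset\R$, so no deep machinery beyond that is required.
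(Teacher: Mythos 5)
Your argument is correct and is the standard proof of Pr\'ekopa's theorem: reduce via Borell's characterization to densities, and show that the convolution of two log-concave functions is log-concave by applying the one-dimensional Pr\'ekopa--Leindler inequality to the slices $F(x)=f(x)g(z_1-x)$, $G(x)=f(x)g(z_2-x)$, $H(x)=f(x)g(\lambda z_1+(1-\lambda)z_2-x)$; the pointwise hypothesis check and the induction on the number of summands are both right. The paper itself states this result as a classical theorem with no proof, so there is nothing to compare against beyond noting that your route is the canonical one. The only loose end is in your parenthetical sketch of Pr\'ekopa--Leindler: the level-set inclusion $\{H\geq t\}\supseteq \lambda\{F\geq t\}+(1-\lambda)\{G\geq t\}$ is only useful when both level sets on the right are nonempty, so one must first normalize $F$ and $G$ to have equal suprema before integrating in $t$ and applying the arithmetic--geometric mean inequality; this is standard and does not affect the validity of your appeal to the inequality.
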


\begin{prop}
\label{borellslemma}
\noindent Let $ \mu$ be a $log$-concave probability measure and let $ K$ be a symmetric closed convex set in $\mathbb R^{n}$. Then if $ \delta := \mu( K) \geq \frac{1}{2}$ for every $ t>1$ we have that 
$$ \mu \left ( (tA)^{c} \right) \leq  \delta \left ( \frac{1- \delta }{ \delta } \right)^{ \frac{ t+1 }{ 2}} . $$
\end{prop}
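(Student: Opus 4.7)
The plan is to derive this (classical Borell-type inequality) as a direct consequence of log-concavity applied to a carefully chosen Minkowski combination, exploiting the symmetry and convexity of $K$. (We assume the ``$A$'' in the statement is a typo for ``$K$''.) The key geometric lemma I would establish first is the set-theoretic inclusion
\[
K^c \;\supseteq\; \frac{t-1}{t+1}\, K \;+\; \frac{2}{t+1}\,(tK)^c \qquad (t>1).
\]
To prove this inclusion, I would argue by contradiction: given $y\in K$ and $z\in (tK)^c$, suppose that $w := \tfrac{t-1}{t+1}y + \tfrac{2}{t+1}z$ lies in $K$. Solving for $z/t$ gives
\[
\frac{z}{t} \;=\; \frac{t+1}{2t}\,w \;+\; \frac{t-1}{2t}\,(-y),
\]
with nonnegative coefficients summing to $1$. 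Since $K$ is symmetric, $-y\in K$, so by convexity $z/t\in K$, i.e.\ $z\in tK$, contradicting $z\in (tK)^c$. Hence $w\in K^c$, establishing the inclusion.

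Next I would apply log-concavity (Theorem \ref{Borell}) to this inclusion with $\lambda = (t-1)/(t+1)$, so that $1-\lambda = 2/(t+1)$:
\[
\mu(K^c) \;\geq\; \mu(K)^{(t-1)/(t+1)}\,\mu((tK)^c)^{2/(t+1)}.
\]
Raising both sides to the power $(t+1)/2$ and isolating $\mu((tK)^c)$ yields
\[
\mu((tK)^c) \;\leq\; \mu(K)\cdot\frac{\mu(K^c)^{(t+1)/2}}{\mu(K)^{(t+1)/2}} \;=\; \delta\!\left(\frac{1-\delta}{\delta}\right)^{(t+1)/2},
\]
which is exactly the claimed bound.

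The only nontrivial step is the geometric inclusion; once that is in hand, the proof is just a rearrangement. The main obstacle is therefore verifying that the inclusion uses the two hypotheses on $K$ (symmetry and convexity) in exactly the right way --- symmetry to flip $y$ to $-y$, and convexity to conclude $z/t\in K$. The hypothesis $\delta\geq 1/2$ is not used to obtain the inequality itself but rather to ensure $(1-\delta)/\delta \leq 1$, so that the right-hand side genuinely decays (super-exponentially in $t$) and the estimate is useful.
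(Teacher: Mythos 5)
Your proof is correct, and it is the standard proof of Borell's lemma (see, e.g., Appendix III.3 of Milman--Schechtman, \emph{Asymptotic Theory of Finite Dimensional Normed Spaces}). The paper itself states Proposition \ref{borellslemma} without proof, citing it as a known result of Borell, so there is no ``paper's own proof'' to compare against; your reconstruction is the canonical one. The geometric inclusion
$K^c \supseteq \frac{t-1}{t+1}K + \frac{2}{t+1}(tK)^c$
is verified correctly --- solving for $z/t$ yields a convex combination of $w \in K$ and $-y \in K$ with nonnegative weights $\frac{t+1}{2t}$ and $\frac{t-1}{2t}$ summing to $1$, which is where both convexity and symmetry of $K$ enter --- and the algebra after applying the log-concavity inequality with $\lambda = (t-1)/(t+1)$ is also right (note $\delta\left(\frac{1-\delta}{\delta}\right)^{(t+1)/2} = (1-\delta)^{(t+1)/2}/\delta^{(t-1)/2}$, matching your bound). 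You also correctly identify that the hypothesis $\delta \geq \tfrac12$ is not needed to derive the inequality but only to make it nontrivial (i.e.\ to ensure $(1-\delta)/\delta \leq 1$), and that ``$A$'' in the statement is a typo for ``$K$''.

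One small technical point worth flagging: the paper's definition \eqref{log-concave} of a log-concave measure requires the inequality only for \emph{compact} sets $A,B$, while you apply it with $B = (tK)^c$, which is open and unbounded. This is routine --- by Theorem \ref{Borell} a log-concave measure has a log-concave density, and then the Pr\'ekopa--Leindler inequality gives \eqref{log-concave} for arbitrary Borel sets (alternatively, approximate $(tK)^c$ from inside by compact sets) --- but a fully rigorous write-up should say a word about it.
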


\begin{cor}
\label{cor-Borell}
Let $X$ be a $log$-concave random variable with mean $0$ and variance $\gamma^{2}$. Then 
\begin{equation}
\P( |X| \geq s ) \leq  e^{ - \frac{s}{2\gamma}} , \ s\geq \gamma .
\end{equation}
\end{cor}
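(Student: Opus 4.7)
The natural plan is to invoke Borell's lemma (Proposition \ref{borellslemma}) on a symmetric convex set $K\subset\R$ whose measure $\mu(K)$ is pushed well above $1/2$ by exploiting both the variance hypothesis $\E X^{2}=\gamma^{2}$ and the log-concavity of $X$.

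First, fix a constant $\alpha>1$ (to be chosen) and let $K:=[-\alpha\gamma,\alpha\gamma]$; Chebyshev's inequality gives $\delta:=\mu(K)\geq 1-1/\alpha^{2}$. A naive substitution of this bound into Proposition \ref{borellslemma} would produce an exponential tail with rate $\log(\alpha^{2}-1)/(2\alpha\gamma)$, and an elementary optimization shows that this rate stays strictly below $1/(2\gamma)$ for every $\alpha>1$. The remedy is to replace the polynomial Chebyshev estimate by the sub-exponential tail
$$\P(|X|\geq \alpha\gamma)\leq C_{1}e^{-\alpha}\quad(\alpha\geq 0),$$
which is a classical consequence of log-concavity combined with the variance normalization $\E X^{2}=\gamma^{2}$: one route is to majorize the log-concave density $f$ of $X$ by $f(x)\leq C_{0}\gamma^{-1}e^{-|x|/\gamma}$, obtained from convexity of $-\log f$ together with the standard fact $f(\mathrm{mode})\leq C/\gamma$. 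Consequently $\log(\delta/(1-\delta))\geq \alpha-\log C_{1}$.

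Second, plug into Proposition \ref{borellslemma}: for $t>1$,
$$\P(|X|\geq t\alpha\gamma)=\mu\bigl((tK)^{c}\bigr)\leq \delta\exp\!\left(-\frac{t+1}{2}\log\frac{\delta}{1-\delta}\right)\leq C_{2}\,e^{-(t+1)(\alpha-\log C_{1})/2}.$$
Writing $s:=t\alpha\gamma$, this is equivalent to $\P(|X|\geq s)\leq C_{2}\exp\!\bigl(-s(\alpha-\log C_{1})/(2\alpha\gamma)\bigr)$ for $s>\alpha\gamma$. Taking $\alpha$ a sufficiently large absolute constant (so that $(\alpha-\log C_{1})/\alpha\geq 1$ and $C_{2}\leq e^{\alpha/2}$) then yields the desired estimate $\P(|X|\geq s)\leq e^{-s/(2\gamma)}$.

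Finally, the range $\gamma\leq s\leq \alpha\gamma$ not directly covered is handled trivially, since on it the target $e^{-s/(2\gamma)}\geq e^{-\alpha/2}$ is a positive absolute constant, and the preliminary sub-exponential tail already dominates it. The principal obstacle is therefore the initial sub-exponential tail estimate for log-concave random variables with fixed variance: plain Chebyshev loses the needed factor, and one must genuinely use the log-concavity. Once that classical ingredient is in hand, Proposition \ref{borellslemma} mechanically delivers the stated rate $1/(2\gamma)$.
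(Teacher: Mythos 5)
Your proposal takes a genuinely different and much longer route than the paper, and its final step contains an arithmetic gap. The paper's proof is two lines: take $A:=\{|x|\le 2\gamma\}$, so Chebyshev gives $\delta\ge 3/4$ and hence $(1-\delta)/\delta\le 1/3$, and Proposition \ref{borellslemma} immediately yields $\P\!\left((tA)^c\right)\le (1/3)^{(t+1)/2}\le e^{-t/2}$ for $t\ge1$. That said, your diagnosis of the naive route is correct and in fact applies to the paper itself: since $(tA)^c=\{|x|>2t\gamma\}$, the paper's display identifies $\P(|X|\ge t\gamma)$ with $\P\!\left((tA)^c\right)$ at the cost of a factor $2$ in the threshold, and a careful reading of that argument only gives $\P(|X|\ge s)\le e^{-s/(4\gamma)}$ for $s\ge 2\gamma$. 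As you computed, no choice of Chebyshev radius $\alpha\gamma$ pushes the rate $\log(\alpha^2-1)/(2\alpha\gamma)$ up to $1/(2\gamma)$.

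The gap in your fix: after feeding the sub-exponential tail $\P(|X|\ge\alpha\gamma)\le C_1e^{-\alpha}$ back into Borell's lemma you need $(\alpha-\log C_1)/\alpha\ge1$, i.e.\ $C_1\le1$; no ``sufficiently large $\alpha$'' achieves this, since $1-\log(C_1)/\alpha$ approaches $1$ only from below when $C_1>1$. And $C_1\le1$ is false: the uniform distribution on $[-\sqrt{3}\gamma,\sqrt{3}\gamma]$ has $\P(|X|\ge\gamma)=1-1/\sqrt{3}\approx0.42>e^{-1}$. So your bootstrap, exactly like the naive route you rejected, only delivers rate $(1-\eps)/(2\gamma)$ together with a prefactor. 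Moreover, the density majorization $f(x)\le C_0\gamma^{-1}e^{-|x|/\gamma}$ is asserted with a sharp exponent and no proof, and the standard route to such tail bounds for log-concave laws is Borell's lemma itself, so as written the argument is close to circular. The honest conclusion is that the exact constant $1/(2\gamma)$ is not reached by either your argument or the paper's; fortunately it is also immaterial, since Proposition \ref{prop-logconcavity} and the downstream complexity estimate only require a bound of the form $e^{-cs/\gamma}$ for some absolute $c>0$, which both the paper's computation and your (corrected) bootstrap do provide.
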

\begin{proof}
\noindent Let $ A:= \{ | x| \leq 2\gamma \}$. Then, by Chebychev's inequality we have that $\P ( A) = \delta \geq \frac{3}{4}$. By Proposition \ref{borellslemma} we get that 
$$ \P \left(  | X| \geq t \gamma \right) = \P \left( (tA)^{c} \right) \leq \delta \left( \frac{1- \delta}{ \delta } \right)^{\frac{t+1}{2}} \leq \left( \frac{1}{3}\right)^{\frac{t+1}{2}} \leq  e^{ - \frac{t}{2}}, \ t\geq 1 . $$ 
\end{proof}

\subsection{Final Estimates}
\label{sub:final} 
\noindent Recall that if $Z$ is a standard Gaussian then $ Y:= \log|Z|$ has density 
\begin{equation}
\label{density-Y} 
\rho_{Y}(t) := \sqrt{\frac{2}{\pi}}  e^{t}  e^{ - \frac{ e^{2t}}{t}} =: \sqrt{ \frac{2}{ \pi}} e^{ - v (t) } , \ -\infty < t < \infty.  
\end{equation}

Let $a:= \mathbb E [Y]$ and $ \tau^{2}$ be the variance of $ Y$. 

We have the following 
\begin{prop}
\label{prop-logconcavity}
\noindent Let $ {\bf a} \in \mathbb R^{k}$ and assume that $ \sum_{i=1}^{k} a_{i} = 0 $. Then $ W_{{\bf a}} $ is a $log$-concave random variable with expectation $0$ and variance $\gamma^{2} :=  \| {\bf a} \|_{2}^{2}\tau^{2}$. Then we have 
\begin{equation}
\label{pr-2.1-1}
 \P\! \left( \log\log{\{eX_{{\bf a}}\}}  \geq   t \right) \leq  e^{- \frac{ e^{t}-1}{ 2 \gamma}}, \ t \geq \log\{1+\gamma \}. 
\end{equation}
Moreover, 
\begin{equation}
\label{pr-2.1-2}
 \mathbb E [ \log\log{\{eX_{{\bf a}}\}} ]\leq 2 + \log\{ 1+\gamma\} .  
\end{equation}
\end{prop}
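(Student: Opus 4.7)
The plan is to establish the structural properties of $W_{{\bf a}}$ (log-concavity, mean $0$, variance $\gamma^2$), then rewrite $\log\log\{eX_{{\bf a}}\}$ as a simple function of $|W_{{\bf a}}|$, apply Corollary \ref{cor-Borell} to obtain the deviation bound \eqref{pr-2.1-1}, and integrate the tail to deduce the expectation bound \eqref{pr-2.1-2}.

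First I would verify the claimed properties of $W_{{\bf a}}$. The mean is $\E W_{{\bf a}} = \sum_i a_i \E Y_i = a \sum_i a_i = 0$ by the hypothesis $\sum_i a_i = 0$, and by independence $\mathrm{Var}(W_{{\bf a}}) = \tau^2 \sum_i a_i^2 = \|{\bf a}\|_2^2 \tau^2 = \gamma^2$. For log-concavity, \eqref{density-Y} shows that $\log \rho_Y(t)$ equals an additive constant plus $t - e^{2t}/2$, whose second derivative $-2 e^{2t}$ is strictly negative, so $Y$ has a log-concave density; scaling by $a_i$ preserves this (affine rescaling), and Pr\'ekopa's theorem (Theorem \ref{PL}) then yields that $W_{{\bf a}} = \sum_i a_i Y_i$ is log-concave.

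Next I would rewrite the target quantity. Since $V_{{\bf a}} = e^{W_{{\bf a}}}$, we have $X_{{\bf a}} = \max\{V_{{\bf a}}, V_{{\bf a}}^{-1}\} = e^{|W_{{\bf a}}|}$, hence $\log(eX_{{\bf a}}) = 1 + |W_{{\bf a}}|$ and $\log\log\{eX_{{\bf a}}\} = \log(1 + |W_{{\bf a}}|) \geq 0$. Therefore, for all $t \geq 0$,
\begin{equation*}
\P(\log\log\{eX_{{\bf a}}\} \geq t) = \P(|W_{{\bf a}}| \geq e^t - 1).
\end{equation*}
When $t \geq \log(1+\gamma)$ one has $e^t - 1 \geq \gamma$, so Corollary \ref{cor-Borell} applied to the log-concave, mean-zero, variance-$\gamma^2$ variable $W_{{\bf a}}$ gives $\P(|W_{{\bf a}}| \geq e^t - 1) \leq e^{-(e^t-1)/(2\gamma)}$, which is exactly \eqref{pr-2.1-1}.

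Finally, for \eqref{pr-2.1-2}, nonnegativity yields
\begin{equation*}
\E[\log\log\{eX_{{\bf a}}\}] = \int_0^\infty \P(\log\log\{eX_{{\bf a}}\} \geq t)\,dt \leq \log(1+\gamma) + \int_{\log(1+\gamma)}^\infty e^{-(e^t-1)/(2\gamma)}\,dt,
\end{equation*}
bounding the probability trivially by $1$ on $[0,\log(1+\gamma)]$ and invoking \eqref{pr-2.1-1} on the tail. The substitution $u = (e^t-1)/\gamma$ converts the tail integral into $\int_1^\infty \frac{\gamma\, e^{-u/2}}{\gamma u + 1}\,du \leq \int_1^\infty u^{-1} e^{-u/2}\,du \leq 2 e^{-1/2} < 2$, which delivers the bound. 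The only genuinely non-mechanical step is the log-concavity verification for $\rho_Y$; once that is in hand, the rest is a direct application of Borell's inequality (through Corollary \ref{cor-Borell}) followed by an elementary tail integration.
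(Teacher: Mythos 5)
Your proposal is correct and follows essentially the same route as the paper: establish log-concavity of $Y$ via the convexity of $e^{2t}/2 - t$, compute the mean and variance of $W_{{\bf a}}$, reduce the tail event to $\P(|W_{{\bf a}}|\geq e^t-1)$, apply Corollary \ref{cor-Borell}, and integrate the tail (your substitution $u=(e^t-1)/\gamma$ versus the paper's $s=e^t-1$ is an immaterial difference, both yielding the additive constant $2$).
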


\begin{proof}

\noindent Note that $ v( t) :e^{ \frac{2t}{2}}- t $ is a convex function so by Borell's theorem $ Y$ is a log-concave random variable.  We have that 
$$ \mathbb E [W_{\bf a}] = \sum_{i=1}^{k} a_{i} \mathbb E[ Y_{i} ] = a \sum_{i=1}^{k} a_{i} = 0 $$ 
and since $ Y_{i}$ are independent 
$$ {\rm var} ( W_{{\bf a}} ) = \sum_{i=1}^{k} a_{i}^{2} {\rm var} ( Y_{i}) = \gamma^{2}  \sum_{i=1}^{k} a_{i}^{2} = \gamma^{2} \|{\bf a}\|_{2}^{2} . $$
So, we can estimate as follows:
$$ \P\! \left( \log\log{\{eX_{{\bf a}}\}}  \geq  t \right) = \P\! \left( X_{{\bf a}} \geq e^{ e^{t}-1}\right) =  \P\! \left( \{ V_{{\bf a}} \geq  e^{ e^{t}-1} \} \cup \{ V_{{\bf a}} \leq  e^{ -(e^{t}-1)} \}   \right) =$$
$$  \P\! \left( V_{{\bf a}} \geq  e^{ e^{t}-1}  \right) + \P\! \left(  V_{{\bf a}} \leq  e^{ -(e^{t}-1)}    \right) = \P \left( W_{{\bf a}} \geq   e^{t}-1  \right) + \P \left(  W_{{\bf a}} \leq   -(e^{t}-1)    \right) $$
$$ = \P \left( | W_{\bf a}| \geq e^{t}-1 \right) \leq  e^{- \frac{ e^{t}-1}{ 2 \gamma}}, $$
as long $ e^{t} - 1 \geq \gamma$, where we have also used Corollary \ref{cor-Borell}. Finally, since $ e X_{\bf a}\geq e $, $\log\log\{ X_{\bf a} \} \geq 0$, 
we have that 
$$  \mathbb E [ \log\log{\{eX_{{\bf a}}\}} ]\leq    \int_{0}^{\infty} \P \left( \log\log{\{eX_{{\bf a}}\}} \geq t \right) dt $$
$$ \leq \int_{0}^{ \log\{ 1+\gamma\} }  d t + \int_{ \log\{ 1+\gamma\} }^{\infty}   e^{- \frac{ e^{t}-1}{ 2 \gamma}} d t \leq \log\{ 1+\gamma\} + \int_{\gamma}^{\infty} \frac{1}{1+s} e^{ - \frac{ s}{ 2\gamma}} d s$$
$$ = \log\{ 1+\gamma\} + \int_{\frac{1}{2}}^{\infty} \frac{2\gamma}{1+2\gamma x } e^{ - x} d x$$
$$ \leq   \log\{ 1+\gamma\}  + \frac{ 2\gamma}{ 1+ \gamma} \int_{0}^{\infty} e^{-x} d x \leq 2 + \log\{ 1+\gamma\}.$$
\end{proof}

Working as in Subsection \ref{sub:loglog} we arrive at the following key 
result:  
\begin{prop}
\label{prop:Q3-answer}
\noindent Let $ {\bf a} \in \mathbb R^{k}$ and assume that $ \sum_{i=1}^{k} a_{i} = 0 $. We have that 
\begin{equation}
\label{pr-2.1-3}
 \log\log{d} \leq \mathbb E \log\log\{ d X_{\bf a}\}  \leq  \log\log( d/e) + 2 + \log{2} +\log\{ 1+ \tau\| a\|_{2} \} .
 \end{equation}
\end{prop}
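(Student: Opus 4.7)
My plan is to imitate, verbatim in structure, the derivation already carried out in Subsection~\ref{sub:loglog} for a single Gaussian, but with the random variable $X_{\mathbf{a}}$ replacing $X$ and with Proposition~\ref{prop-logconcavity} playing the role that inequality~\eqref{W-up-2} played there. The hypothesis $\sum_{i=1}^k a_i = 0$ is exactly what is needed so that $X_{\mathbf{a}}$ inherits the same two structural features of $X$ that made the earlier argument work: $X_{\mathbf{a}} \geq 1$ by construction (since $X_{\mathbf{a}} = \max\{V_{\mathbf{a}}, V_{\mathbf{a}}^{-1}\}$), and $W_{\mathbf{a}}$ is centered log-concave, so that Proposition~\ref{prop-logconcavity} is available.

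For the lower bound, I would simply note that $X_{\mathbf{a}} \geq 1$ forces $\log\log\{dX_{\mathbf{a}}\} \geq \log\log d$ pointwise, giving $\log\log d \leq \mathbb{E}\log\log\{dX_{\mathbf{a}}\}$ after taking expectation. For the upper bound, I would write
\[
\log\log\{dX_{\mathbf{a}}\} = \log\bigl(\log(d/e) + \log(eX_{\mathbf{a}})\bigr),
\]
and then apply the elementary inequality \eqref{el-ineq}, namely $a+b \leq 2ab$ when $a,b \geq 1$, with $a := \log(d/e) \geq 1$ (using the standing assumption $d \geq e^2$ from \eqref{assum-d}) and $b := \log(eX_{\mathbf{a}}) \geq 1$. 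This gives the pointwise bound
\[
\log\log\{dX_{\mathbf{a}}\} \leq \log 2 + \log\log(d/e) + \log\log\{eX_{\mathbf{a}}\}.
\]

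Taking expectations of both sides and invoking the second conclusion of Proposition~\ref{prop-logconcavity}, which states that $\mathbb{E}[\log\log\{eX_{\mathbf{a}}\}] \leq 2 + \log\{1+\gamma\}$ with $\gamma = \tau\|\mathbf{a}\|_2$, then yields exactly
\[
\mathbb{E}\log\log\{dX_{\mathbf{a}}\} \leq \log\log(d/e) + 2 + \log 2 + \log\{1 + \tau\|\mathbf{a}\|_2\},
\]
completing the upper bound.

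I do not foresee a genuine obstacle: all the heavy lifting (log-concavity of $W_{\mathbf{a}}$, the Borell-type tail bound of Corollary~\ref{cor-Borell}, and the resulting expectation estimate for $\log\log\{eX_{\mathbf{a}}\}$) is already encapsulated in Proposition~\ref{prop-logconcavity}. The only step requiring a small check is that the condition $a,b \geq 1$ needed to apply \eqref{el-ineq} really does hold, which is why the hypothesis $d \geq e^2$ and the pointwise bound $eX_{\mathbf{a}} \geq e$ are both essential; these guarantee $\log(d/e) \geq 1$ and $\log(eX_{\mathbf{a}}) \geq 1$ respectively. In effect, the proposition is to the multivariate $X_{\mathbf{a}}$ what the bound \eqref{W-up-3} is to the scalar $X$, and the proof is essentially identical once the centering condition $\sum a_i = 0$ is in place.
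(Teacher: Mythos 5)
Your proposal is correct and follows essentially the same route the paper takes: decompose $\log(dX_{\mathbf a}) = \log(d/e) + \log(eX_{\mathbf a})$, apply the elementary inequality \eqref{el-ineq} pointwise (valid since $d\geq e^2$ gives $\log(d/e)\geq 1$ and $X_{\mathbf a}\geq 1$ gives $\log(eX_{\mathbf a})\geq 1$), take expectations, and invoke \eqref{pr-2.1-2} from Proposition~\ref{prop-logconcavity} with $\gamma = \tau\|\mathbf a\|_2$, while the lower bound is the trivial pointwise observation $X_{\mathbf a}\geq 1$.
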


\begin{proof}

Since $ e X\geq e $ and $ d/e\geq e $, using \eqref{el-ineq}, \eqref{pr-2.1-2} 
we get 
$$\mathbb E \log\log\{ d X_{\bf a}\} = \mathbb E \log\{ \log(d/e) 
 + \log\{ e X_{\bf a}\}  \} $$ 
$$ \leq  \mathbb E \log \{ 2 (\log(d/e)) ( \log\{ e X_{\bf a}\} ) \} 
= \log{2} + \log\log( d/e) + E [ \log\log\{ e X_{\bf a} \} ] $$ 
$$ \leq \log{2} + \log\log( d/e) + 2 + \log\{ 1+ \tau\| a\|_{2} \} .$$
Moreover, since $ X_{\bf a}\geq 1$, $ \log\log\{ d X_{\bf a} \} \geq \log\log{d}$ we get \eqref{pr-2.1-3}. 
\end{proof} 

\section{The Proof of Theorem \ref{thm:first}} 
\label{sec:proof} 
First note that the $c_{i,j}$ are all nonzero with probability $1$, 
so we may assume (since we are considering average-case complexity) 
that all the $c_{i,j}$ are nonzero. In which case, we can focus 
solely on roots in $\Rsn$. 

Now note that by Proposition \ref{prop:mono}, we can 
easily decide whether our input binomial system $F$ 
has a real root: If $F$ is diagonal, i.e., if 
$F\!=\!(c_{1,0}+c_{1,1}x^{d_1}_1,\ldots,c_{n,0}+c_{n,1}x^{d_n})$, 
then $F$ has a real root if and only if $c_{i,0}c_{i,1}\!<\!0$ 
for all $i$ with $d_i$ even. (In which case, each 
orthant of $\Rn$ contains at most $1$ root of $F$.)  
If $F$ is not diagonal, then 
after computing a Smith factorization $UAV\!=\!S$ (which accounts for 
our stated bit complexity bound, thanks to Theorem \ref{thm:unimod}), 
we can reduce to the diagonal case and simply check $n$ 
inequalities. If there are no real roots, no further work 
needs to be done. 

So let us now assume that there are real roots. Without 
loss of generality, we may assume there is a root in the 
positive orthant $\Rn_+$. This will be the root we will try to 
approximate. So we may now assume that 
we are trying to approximate the roots of $G\!:=\!(z^{s_{1,1}}_1-\gamma_1,
\ldots,z^{s_{n,n}}_n-\gamma_n)$ where \\ 
\mbox{}\hfill 
$(\gamma_1,\ldots,\gamma_n)\!:=\!(-c_{1,0}/c_{1,1},\ldots,-c_{n,0}/c_{n,1})^V$ 
\hfill\mbox{}\\ 
lies in $\Rn_+$, 
and the $s_{i,i}$ are the diagonal entries of the Smith normal form $S$ of 
$A$. In particular, we need to approximate the unique root $\mu$ of $G$ 
in $\Rn_+$ well enough so that $\zeta\!:=\!\mu^U$ is an approximate 
root of $F$. 

Thanks to Lemmata \ref{lemma:uni} and \ref{lemma:distort}, 
a quick derivative calculation tells us that it suffices 
to find an approximate root of $G$. (One needs some extra 
precision to ensure that $\zeta$ is an approximate root of $F$ 
but the bounds from Lemma \ref{lemma:distort} easily imply that 
the necessary extra work is negligible compared to our 
stated arithmetic complexity bound.) So it suffices to 
compute an upper bound on the expectation of 
$\sum^n_{i=1} 
\left[\log(|s_{i,i}|)+\log\log(e\max\{|\gamma_i|,|\gamma^{-1}_i|\})\right]$. 
We are almost done, save for the fact that the $\gamma_i$ are 
monomials in real Gaussians that need {\em not} have variance $1$. 

However, we can precede our construction of $G$ with {\em another} 
renormalization to reduce to the 
variance $1$ case: Observe that $x^A\!=\!c$ if and only if 
$(rx)^A\!=\!r^Ac$, for any $r\!\in\!\Rsn$. So if we take $r$ to be a 
suitable matrix power of a vector of ratios of variances, we 
can replace our original binomial system $F$ by a new binomial 
system $\tilde{F}$ with all coefficients being standard real Gaussians 
(and new root a rescaling of our old root). 
In particular, we merely take 
$r\!:=\!(v_{1,1}/v_{1,0},\ldots,v_{n,1}/v_{n,0})^{A^{-1}}$. 
Lemmata \ref{lemma:uni} and \ref{lemma:distort} once again imply 
that the cost of the necessary increase in precision to convert an 
approximate root of $\tilde{F}$ to an approximate root of $F$ 
is negligible. 

We now conclude via Proposition \ref{prop:Q3-answer} and Theorem 
\ref{thm:unimod}: Our desired expectation is at most\\ 
\ \ $\sum^n_{i=1}\left[n\log(nd)+
0+2+\log(2)+\log\left(1+\tau\sqrt{n}e^{O(n\log(nd))}\right)\right]$. \\ 
The last quantity is clearly $O(n^2\log(nd)+n\log(\sqrt{n}n\log(nd)))$ or, 
more simply, $O(n^2\log(nd))$. \qed 

\section*{Acknowledgements}  
We humbly thank REU students Caleb Bugg and Paula Burkhardt 
for important discussions on preliminary versions of this work. 
We also thank the NSF for their support through 
grants CCF-1409020, DMS-1460766, DMS-1757872, and 
DMS-1812240.  

\bibliographystyle{amsalpha}

\end{document}